\DeclareMathOperator{\Cu}{Cu}
\DeclareMathOperator{\PZ}{PZ}
\DeclareMathOperator{\tr}{{tr}}
\newcommand{\K}{{\mathcal{K}}}
\newcommand{\B}{{\mathbb{B}}}
\newcommand{\R}{{\mathbb{R}}}
\newcommand{\N}{{\mathbb{N}}}
\newcommand{\Cs}{{$C^*$-algebra}}
\newcommand{\ep}{{\varepsilon}}
\newcommand{\Proj}{P}
\newcommand{\cO}{{\mathcal{O}}}
\newcommand{\cM}{{\mathcal{M}}}
\newcommand{\ssubset}{\ensuremath{\subset \hskip-5pt \subset}}
\newcommand{\uloopr}[1]{\ar@'{@+{[0,0]+(-4,5)}@+{[0,0]+(0,10)}@+{[0,0] +(4,5)}}^{#1}}
\newcommand{\uloopd}[1]{\ar@'{@+{[0,0]+(5,4)}@+{[0,0]+(10,0)}@+{[0,0]+ (5,-4)}}^{#1}}
\newcommand{\dloopr}[1]{\ar@'{@+{[0,0]+(-4,-5)}@+{[0,0]+(0,-10)}@+{[0, 0]+(4,-5)}}_{#1}}
\newcommand{\dloopd}[1]{\ar@'{@+{[0,0]+(-5,4)}@+{[0,0]+(-10,0)}@+{[0,0 ]+(-5,-4)}}_{#1}}
\newcommand{\luloop}[1]{\ar@'{@+{[0,0]+(-8,2)}@+{[0,0]+(-10,10)}@+{[0, 0]+(2,2)}}^{#1}}
\newtheorem{lem}{Lemma}[section]
\newtheorem{corol}[lem]{Corollary}
\newtheorem{theor}[lem]{Theorem}
\newtheorem{prop}[lem]{Proposition}
\newtheorem{defi}[lem]{Definition}
\theoremstyle{definition}
\newtheorem{pargr}[lem]{}
\newtheorem{rema}[lem]{Remark}
\begin{document}

\title[The Cuntz semigroup and comparison of open projections]{The Cuntz semigroup and comparison of open projections}%
\author{Eduard Ortega, Mikael R\o rdam and Hannes Thiel}

\address{Department of Mathematical Sciences, NTNU, NO-7491 Trondheim, Norway}
\email{eduardo.ortega@math.ntnu.no}
\address{Department of Mathematical Sciences, University of Copenhagen, Universitetsparken 5, DK-2100, Copenhagen \O, Denmark} \email{rordam@math.ku.dk}
\address{Department of Mathematical Sciences, University of Copenhagen, Universitetsparken 5, DK-2100, Copenhagen \O, Denmark} \email{thiel@math.ku.dk}

\thanks{This research was supported by the NordForsk Research Network
  ``Operator Algebras and Dynamics'' (grant \#11580).
  The first named author was
  partially supported by the Research Council of Norway (project 191195/V30), by  MEC-DGESIC (Spain) through Project
  MTM2008-06201-C02-01/MTM, by the Consolider Ingenio
``Mathematica" project CSD2006-32 by the MEC, and by 2009 SGR 1389 grant of the Comissionat per Universitats i Recerca de la Generalitat de
Catalunya. The second named author was supported by grants from the Danish National Research Foundation and the Danish Natural Science Research Council (FNU). The third named author was partially supported by the Marie Curie Research Training Network EU-NCG and by the Danish National Research Foundation}
\subjclass[2000]{Primary
46L10, % General theory of von Neumann algebras
46L35; % Classifications of $C^*$-algebras
Secondary %
06F05, % Ordered semigroups and monoids
19K14,  % $K_0$ as an ordered group, traces
46L30, % States of C*-algebras
46L80  % $K$-theory and operator algebras
}
\keywords{\Cs s, Cuntz semigroup, von Neumann Algebras, open projections}
\date{\today}

\begin{abstract}
We show that a number of naturally occurring comparison relations on positive elements in a \Cs{} are equivalent to natural comparison properties of their corresponding open projections in the bidual of the \Cs.
In particular we show that Cuntz comparison of positive elements corresponds to a comparison relation on open projections, that we call Cuntz comparison, and which is defined in terms of---and is weaker than---a comparison notion defined by Peligrad and Zsid\'{o}. The latter corresponds to a well-known comparison relation on positive elements defined by Blackadar. We show that Murray-von Neumann comparison of open projections corresponds to tracial comparison of the corresponding positive elements of the \Cs. We use these findings to give a new picture of the Cuntz semigroup.
\end{abstract}

\maketitle

%########################################################################################################################
%########################################################################################################################
\section{Introduction}

\noindent
    There is a well-known bijective correspondence between hereditary sub-\Cs s of a \Cs{} and open projections in its bidual. Thus to every positive element $a$ in a \Cs{} $A$ one can associate the open projection $p_a$ in $A^{**}$ corresponding to the hereditary sub-\Cs{} $A_a = \overline{aAa}$.
 Any comparison relation between positive elements in a \Cs{} that is invariant under the relation $a \cong b$, defined by $a \cong b \Leftrightarrow A_a = A_b$, can in this way be translated into a comparison relation between open projections in the bidual. Vice versa, any comparison relation between open projections corresponds to a comparison relation (which respects $\cong$) on positive elements of the underlying \Cs.

    Peligrad and Zsid\'{o} defined in \cite{PelZsi} an equivalence relation (and also a sub-equivalence relation) on open projections in the bidual of a \Cs{} as Murray-von Neumann equivalence with the extra assumption that the partial isometry that implements the equivalence gives an isomorphism between the corresponding hereditary sub-\Cs s of the given \Cs. Very recently, Lin, \cite{Lin10}, noted that the Peligrad-Zsid\'{o} \mbox{(sub-)}equivalence of open projections corresponds to a comparison relation of positive elements considered by Blackadar in \cite{Bla}.

    The Blackadar comparison of positive elements is stronger than the Cuntz comparison relation of positive elements that is used to define the Cuntz semigroup of a \Cs. The Cuntz semigroup has recently come to play an influential role in the classification of \Cs s. We show that Cuntz comparison of positive elements corresponds to a natural relation on open projections, that we also call Cuntz comparison. It is defined in terms of---and is weaker than---the Peligrad-Zsid\'{o} comparison.
    It follows from results of Coward, Elliott, and Ivanescu, \cite{CowEllIva}, and from our results, that the Blackadar comparison is equivalent to the Cuntz comparison of positive elements when the \Cs{} is separable and has stable rank one, and consequently that the Peligrad-Zsid\'{o} comparison is equivalent to our notion of Cuntz comparison of open projections in this case.

    The best known and most natural comparison relation for projections in a von Neumann algebra is the one introduced by Murray and von Neumann. It is weaker than the Cuntz and the Peligrad-Zsid\'{o} comparison relations. We show that Murray-von Neumann \mbox{(sub-)}equivalence of open projections in the bidual in the separable case is equivalent to tracial comparison of the corresponding positive elements of the \Cs.
    The tracial comparison is defined in terms of dimension functions arising from lower semicontinuous tracial weights on the \Cs.
    The proof of this equivalence builds on two results on von Neumann algebras that may have independent interest, and which probably are known to experts:
    One says that Murray-von Neumann comparison of projections in any von Neumann algebra which is not too big (in the sense of Tomiyama---see Section \ref{sec5} for details) is completely determined by normal tracial weights on the von Neumann algebra.
    The other results states that every lower semicontinuous tracial weight on a \Cs{} extends (not necessarily uniquely) to a normal tracial weight on the bidual of the \Cs.

    We use results of Elliott, Robert, and Santiago, \cite{EllRobSan}, to show that tracial comparison of positive elements in a \Cs{} is equivalent to Cuntz comparison if the \Cs{} is separable and exact, its Cuntz semigroup is weakly unperforated, and the involved positive elements are purely non-compact.

    We also relate the comparison of positive elements and of open projections to comparison of the associated right Hilbert $A$-modules.
    The Hilbert $A$-module corresponding to a positive element $a$ in $A$ is the right ideal $\overline{aA}$.
    We show that Blackadar equivalence of positive elements is equivalent to isomorphism of the corresponding Hilbert $A$-modules, and we recall that Cuntz comparison of positive elements is equivalent to a the notion of Cuntz comparison of the corresponding Hilbert $A$-modules introduced in \cite{CowEllIva}.

%########################################################################################################################
%########################################################################################################################
\section{Comparison of positive elements in a \Cs}\label{sec2}

\noindent  We remind the reader about some, mostly well-known, notions of comparison of positive elements in a \Cs. If $a$ is a positive element in a \Cs{} $A$, then let $A_a$ denote the hereditary sub-\Cs{} generated by $a$, i.e.,  $A_a=\overline{aAa}$. The \emph{Pedersen equivalence relation} on positive elements in a \Cs{} $A$ is defined by $a \sim b$ if $a = x^*x$ and $b = xx^*$ for some $x \in A$, where $a,b \in A^+$, and it was shown by Pedersen, that this indeed defines an equivalence relation.
Write $a\cong b$ if $A_a=A_b$. The equivalence generated by these two relations was considered by Blackadar in \cite[Definition 6.1.2]{Bla2}:

\begin{defi}[Blackadar comparison] \label{def:blackadar-comparison}
Let $a$ and $b$ be positive elements in a \Cs{} $A$. Write $a \sim_s b$ if there exists $x \in A$ such that $a \cong x^*x$ and $b  \cong xx^*$, and write $a \precsim_s b$ if there exists $a'\in A_b^+$ with $a\sim_s a'$.
\end{defi}

\noindent
(It follows from Lemma \ref{lem:TFAE:equivalence} below that $\sim_s$ is an equivalence relation.)
Note that $\precsim_s$ is not an order relation on $A^+/\! \!\sim_s$ since in general $a\precsim_s b\precsim_s a$ does not imply $a\sim_s b$ (see \cite[Theorem 9]{Lin1990}). If $p$ and $q$ are projections, then $p \sim_s q$ agrees with the usual notion of equivalence of projections defined by Murray and von Neumann, denoted by $p \sim q$.

The relation defining the Cuntz semigroup that currently is of importance in the classification program for \Cs s is defined as follows:

\begin{defi}[Cuntz comparison of positive elements] \label{def:cuntz-comparison}
Let $a$ and $b$ be positive elements in a \Cs{} $A$. Write $a \precsim b$ if there exists a sequence
$\{x_n\}$ in $A$ such that $x_n^*bx_n \to a$. Write $a \approx b$ if $a\precsim b$ and $b\precsim a$.
\end{defi}

\begin{pargr}[The Cuntz semigroup]
    Let us briefly remind the reader about the ordered Cuntz semigroup $W(A)$ associated to a \Cs{} $A$.
    Let $M_{\infty}(A)^+$ denote the disjoint union $\bigcup_{n=1}^\infty M_n(A)^+$.
    For $a\in M_n(A)^+$ and $b\in M_m(A)^+$ set $a\oplus b=\text{diag }(a,b)\in M_{n+m}(A)^+$, and write $a\precsim b$ if there exists a sequence $\{x_k\}$ in $M_{m,n}(A)$ such that $x_k^*bx_k\longrightarrow a$.
    Write $a\approx b$ if $a\precsim b$ and $b\precsim a$.
    Put $W(A)=M_{\infty}(A)^+ / \!\! \approx$, and let $\langle a\rangle\in W(A)$ be the equivalence class containing $a$.
    Let us denote by $\Cu(A)$ the completed Cuntz semigroup, i.e., $\Cu(A):=W(A\otimes\K)$.
\end{pargr}

\noindent
    Lastly we define comparison by traces.
    We shall here denote by $T(A)$ the set of (norm) lower semicontinuous tracial weights on a \Cs{} $A$.
    We remind the reader that a tracial weight on $A$ is an additive function $\tau \colon A^+ \to [0,\infty]$  satisfying $\tau(\lambda a) = \lambda \tau(a)$ and $\tau(x^*x) = \tau(xx^*)$ for all $a \in A^+$, $x \in A$, and $\lambda \in \R^+$. That $\tau$ is lower semicontinuous means that $\tau(a) = \lim \tau(a_i)$ whenever $\{a_i\}$ is a norm-convergent increasing sequence (or net) with limit $a$.
    Each $\tau \in T(A)$ gives rise to a lower semicontinuous dimension function $d_\tau \colon A^+ \to [0,\infty]$ given by $d_\tau(a) = \sup_{\ep>0} \tau(f_\ep(a))$, where $f_\ep \colon \R^+ \to \R^+$ is the continuous function that is $0$ on $0$, $1$ on $[\ep,\infty)$, and linear on $[0,\ep]$.
    Any dimension function gives rise to an additive order preserving state on the Cuntz semigroup, and in particular it preserves the Cuntz relation $\precsim$.

\begin{defi}[Comparison by traces]
\label{def:trace-comparison}
    Let $a$ and $b$ be positive elements in a \Cs{} $A$.
    Write $a \sim_{\tr} b$ and $a \precsim_{\tr} b$ if $d_\tau(a) = d_\tau(b)$, respectively, $d_\tau(a) \le d_\tau(b)$, for all $\tau \in T(A)$.
\end{defi}

\begin{rema} \label{remark:comparison}
    Observe that
    $$a \precsim_s b \implies a \precsim b \implies a \precsim_{\tr} b, \qquad
        a \sim_s b \implies a \approx b \implies a \sim_{\tr} b$$
    for all positive elements $a$ and $b$ in any \Cs{} $A$.
    In Section \ref{sect:summary} we discuss under which conditions these implications can be reversed.
\end{rema}

%########################################################################################################################
%########################################################################################################################
\section{Open projections}

\noindent
    The bidual $A^{**}$ of a \Cs{} $A$ is equal to the von Neumann algebra arising as the weak closure of the image of $A$ under the universal representation $\pi_u \colon A \to \B(H_u)$ of $A$.
    Following Akemann, \cite[Definition II.1]{Ake69}, and Pedersen, \cite[Proposition 3.11.9, p.77]{Ped}), a projection $p$ in $A^{**}$ is said to be \emph{open} if it is the strong limit of an increasing sequence of positive elements from $A$, or, equivalently, if it belongs to the strong closure of the hereditary sub-\Cs{} $pA^{**}p \cap A$ of $A$.
    We shall denote this hereditary sub-\Cs{} of $A$ by $A_p$. (This agrees with the previous definition of $A_p$ if $p$ is a projection in $A$.)
    The map $p \mapsto A_p$  furnishes a bijective correspondence between open projections in $A$ and hereditary sub-\Cs s of $A$.
    The open projection corresponding to a hereditary sub-\Cs{} $B$ of $A$ is the projection onto the closure of the subspace $\pi_u(B)H_u$ of $H_u$. Let $\Proj_{\mathrm{o}}(A^{**})$ denote the set of open projections in $A^{**}$.

  A projection in $A^{**}$ is \emph{closed} if its complement is open.

    For each positive element $a$ in $A$ we let $p_a$ denote the open projection in $A^{**}$ corresponding to the hereditary sub-\Cs{} $A_a$ of $A$.
 Equivalently, $p_a$ is equal to the range projection of $\pi_u(a)$, and if $a$ is a contraction, then $p_a$ is equal to the strong limit of the increasing sequence $\{a^{1/n}\}$.
    Notice that $p_a = p_b$ if and only if $A_a = A_b$ if and only if $a \cong b$.
    If $A$ is separable, then each hereditary sub-\Cs{} of $A$ contains a strictly positive element and hence is of the form $A_a$ for some $a$.
    It follows that every open projection in $A^{**}$ is of the form $p_a$ for some positive element $a$ in $A$, whence there is a bijective correspondence between open projections in $A^{**}$ and positive elements in $A$ modulo the equivalence relation $\cong$.

\begin{pargr}[Closure of a projection]
    If $K\subseteq \Proj_{\mathrm{o}}(A^{**})$ is a family of open projections, then their supremum $\bigvee K$ is again open.
    Dually, the infimum of a family of closed projections is again closed.
    Therefore, if we are given any projection $p$, then we can define its \emph{closure} $\overline{p}$ as:
    \begin{align*}
        \overline{p}    &:=\bigwedge\{q\in \Proj(A^{**})\ :\ q \text{ is closed, } \, p\leq q\}.
    \end{align*}
\end{pargr}

\vspace{.3cm}
\noindent
We shall pursue various notions of comparisons and equivalences of open projections in $A^{**}$ that, via the correspondence $a \mapsto p_a$, match the notions of comparison and equivalences of positive elements in a \Cs{} considered in the previous section. First of all we have Murray-von Neumann equivalence $\sim$ and subequivalence $\precsim$ of projections in any von Neumann algebra. We shall show in Section~\ref{sec5}
that they correspond to tracial comparison. Peligrad and Zsid\'{o} made the following definition:

\begin{defi}[PZ-equivalence, {\cite[Definition 1.1]{PelZsi}}]
\label{defn:PZ-equiv}
    Let $A$ be a \Cs, and let $p$ and $q$ be open projections in $A^{**}$.
    Then $p,q$ are equivalent in the sense of Peligrad and Zsid\'{o} (PZ-equivalent, for short), denoted by $p\sim_{\PZ}q$, if there exists a partial isometry $v\in A^{**}$ such that
$$p=v^*v, \qquad q = vv^*, \qquad vA_p\subseteq A, \qquad v^*A_q\subseteq A.$$
    Say that $p\precsim_{\PZ}q$ if there exists $p'\in\Proj_{\mathrm{o}}(A^{**})$ such that $p\sim_{\PZ} p'\leq q$.
\end{defi}

\noindent
    PZ-equivalence is stronger than Murray-von Neumann equivalence.
    We will see in Section \ref{sect:summary} that it is in general strictly stronger, but the two equivalences do agree for some \Cs s and for some classes of projections.

We will now turn to the question of PZ-equivalence of left and right support projections. Peligrad and Zsid\'{o} proved in  \cite[Theorem 1.4]{PelZsi} that $p_{xx^*} \sim_{\PZ} p_{x^*x}$ for every $x \in A$ (and even for every $x$ in the multiplier algebra of $A$).  One can ask whether the converse is true. The following result gives a satisfactory answer.

\begin{prop}
\label{prop:PZ_implemented_by_left_right}
    Let $p,q\in\Proj_{\mathrm{o}}(A^{**})$ be two open projections with $p\sim_{\PZ}q$.
    If $p$ is the support projection of some element in $A$, then so is $q$,
    and in this case $p=p_{xx^*}$ and $q=p_{x^*x}$ for some $x\in A$.
\end{prop}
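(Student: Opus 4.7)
My plan is to produce the element $x$ explicitly from the PZ-equivalence data and the chosen $a$ with $p = p_a$. Write $v \in A^{**}$ for a partial isometry implementing $p \sim_{\PZ} q$, so that $v^*v = p$, $vv^* = q$, $vA_p \subseteq A$ and $v^*A_q \subseteq A$. Because $a$ lies in its own hereditary sub-\Cs{} $A_a = A_p$, the element $va$ belongs to $A$, and hence so does its adjoint. I will set
\[
    x := a v^*,
\]
which is an element of $A$. The point of this choice is that $v^* v = p$ and the identity $pa = ap = a$ (which holds because $p = p_a$ is the range projection of $a$) let the projection $p$ be absorbed on the $xx^*$ side, while on the $x^*x$ side the partial isometry $v$ transports the range of $a$ onto the final projection $q$.

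The computation then proceeds as follows. First,
\[
    xx^* \;=\; (av^*)(va) \;=\; a(v^*v)a \;=\; apa \;=\; a^2,
\]
so $p_{xx^*} = p_{a^2} = p_a = p$. Next,
\[
    x^*x \;=\; (va)(av^*) \;=\; va^2v^* \;=\; (va)(va)^*,
\]
and since $p_{yy^*}$ is the range projection of $y$ for any $y \in A^{**}$, it suffices to compute the closure of $(va)(H_u)$. As $p$ is the range projection of $a$, the closure of $aH_u$ is $pH_u$; applying the partial isometry $v$, whose initial space is $pH_u$ and final space is $qH_u$, yields closure $qH_u$. Hence $p_{x^*x} = q$. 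Since $x^*x \in A$, this already shows that $q$ is the support projection of an element of $A$, and the displayed identities give the promised form $p = p_{xx^*}$, $q = p_{x^*x}$.

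The only thing that requires care is the absorption identity $pa = ap = a$, which is needed to simplify $apa$ to $a^2$; this is immediate from the fact that $p = p_a$ is, by definition, the range projection of $\pi_u(a)$ in the universal representation. Beyond that, the main conceptual step is simply recognizing that one should multiply $a$ by $v^*$ rather than by $v$, so that $x$ lands in $A$ via the condition $vA_p \subseteq A$ (through its adjoint). I do not expect any serious obstacle; the argument is essentially a direct verification once $x = av^*$ is written down, and nowhere does it require the partial isometry $v$ itself to lie in $A$.
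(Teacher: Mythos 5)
Your proof is correct and follows essentially the same route as the paper's: both produce $x$ explicitly by multiplying $a$ (you use $av^*$, the paper uses $va^{1/2}$) and use $vA_p\subseteq A$ to see that $x\in A$, then verify that conjugation by $v$ carries the range projection $p$ of $a$ to $q$. The only cosmetic difference is that the paper normalizes $a$ to a contraction and computes $p_{vav^*}=\sup_n va^{1/n}v^*=vp_av^*$, whereas you compute the range projection directly in the universal representation; both verifications are fine.
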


\begin{proof}
 There is a partial isometry $v$ in $A^{\ast\ast}$ with $p=v^\ast v$,  $vv^\ast=q$, and $vA_p\subseteq A$. This implies that $vA_pv^\ast\subseteq A$, so the map
 $x\mapsto vxv^\ast$ defines a $^*$-isomorphism from $A_p$ onto $A_q$. By assumption, $p=p_a$ for some positive element $a$ in $A$. Upon replacing $a$ by $\|a\|^{-1}a$ we can assume that $a$ is a contraction. Put
 $b:=vav^\ast\in A^+$. Then
    $$ p_b =\sup_n\ (vav^\ast)^{1/n} =\sup_n\ va^{1/n}v^\ast =v p_a v^\ast = q.$$
    Hence $q$ is a support projection, and moreover for $x:=va^{1/2}\in A$ we have $a=x^\ast x$ and  $xx^\ast=b$.
\end{proof}

\begin{rema}
    As noted above, every open projection in the bidual of a \emph{separable} \Cs{} is realized as a support projection, so that PZ-equivalence of two open projections means precisely that they are the left and right support projections of some elements in $A$.
\end{rema}

\begin{pargr}[Compact and closed projections]
\label{pargr:cpct_cld_proj}
    We define below an equivalence relation and an order relation on open projections that we shall show to match Cuntz comparison of positive elements (under the correspondence $a \mapsto p_a$).
    To this end we need to define the concept of compact containment, which is inspired by the notion of compact (and closed) projection developed by Akemann.

    The idea first appeared in \cite{Ake69}, although it was not given a name there, and it was later termed in the slightly different context of the atomic enveloping von Neumann algebra in \cite[Definition II.1]{Ake71}.
    Later again, it was studied by Akemann, Anderson, and Pedersen in the context of the universal enveloping von Neumann algebra (see \cite[after Lemma 2.4]{AkeAndPed}).

    A closed projection $p \in A^{**}$ is called \emph{compact} if there exists $a\in A^+$ of norm one such that $pa=p$.
    See \cite[Lemma 2.4]{AkeAndPed} for equivalent conditions.
    Note that a compact, closed projection $p \in A^{**}$ must be dominated by some positive element of $A$ (since $pa=p$ implies $p=apa\leq a^2\in A$).
    The converse also holds (this follows from the result \cite[Theorem II.5]{Ake71} transferred to the context of the universal enveloping von Neumann algebra).
\end{pargr}
%    Every closed projection in the bidual of a unital \Cs{} is compact.

\begin{defi}[Compact containment]
\label{defn:cpct_containment_opnProj}
    Let $A$ be a \Cs, and let $p,q\in\Proj_{\mathrm{o}}(A^{**})$ be open projections.
    We say that $p$ is \emph{compactly contained} in $q$ (denoted $p\ssubset q$) if $\overline{p}$ is a compact projection in $A_q$, i.e., if there exists a positive element $a$ in $A_q$ with $\|a\|=1$ and $\overline{p}a=\overline{p}$.

    Further, let us say that an open projection $p$ is \emph{compact} if it is compactly contained in itself, i.e., if $p\ssubset p$.
\end{defi}

\begin{prop}
\label{prop:cpct_opn_proj_lies_in_A}
    An open projection in $A^{**}$ is compact if and only if it belongs to $A$.
\end{prop}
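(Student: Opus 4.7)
The plan is to prove both implications directly from the definitions, as the result unpacks essentially to a one-line algebraic manipulation once the definitions are properly set up.

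For the reverse implication, suppose $p$ is an open projection in $A$. Then $p$ is also closed, so $\overline{p}=p$. The element $a:=p$ lies in $A_p=pAp$, has norm one (the case $p=0$ being trivial), and satisfies $\overline{p}\,a=p\cdot p=p=\overline{p}$. By definition this means $p\ssubset p$, i.e., $p$ is compact.

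For the forward implication, assume that the open projection $p\in A^{**}$ is compact, so by Definition \ref{defn:cpct_containment_opnProj} there exists $a\in A_p^+$ with $\|a\|=1$ and $\overline{p}\,a=\overline{p}$. The idea is to show that $a$ is forced to equal $p$. Since $a\in A_p=pA^{**}p\cap A$, the element $a$ satisfies $a=pap$, and in particular $pa=a$. On the other hand, because $p$ is open and $\overline{p}$ is its closure we have $p\le\overline{p}$, hence $p\overline{p}=p$. Multiplying the relation $\overline{p}\,a=\overline{p}$ on the left by $p$ therefore yields
\[
pa \;=\; p\overline{p}\,a \;=\; p\overline{p} \;=\; p.
\]
Combining the two expressions for $pa$ gives $a=pa=p$, and since $a\in A$ this shows $p\in A$.

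There is essentially no obstacle: the only thing to keep straight is that the definition of compact containment is phrased through the closure $\overline{p}$ rather than $p$ itself, so one must use the inequality $p\le\overline{p}$ (equivalently $p\overline{p}=p$) to transfer the witnessing identity $\overline{p}\,a=\overline{p}$ into a statement about $pa$. Once this is done, the fact that $a$ is an element of the hereditary subalgebra $A_p$ pins down $a=p$ immediately.
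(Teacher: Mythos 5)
Your proof is correct and follows essentially the same route as the paper's: both directions unpack Definition \ref{defn:cpct_containment_opnProj} directly, and in the forward direction both arguments pin down the witness $a\in A_p^+$ as being equal to $p$ (the paper via the sandwich $p\le\overline{p}\le a\le p$, you via the identity $a=pa=p\overline{p}a=p\overline{p}=p$, which is the same computation in a slightly different dress). The only point worth a word of justification is your assertion in the reverse direction that a projection belonging to $A$ is closed, but this is a standard fact (e.g.\ it follows from Lemma \ref{lm:unit} with $a=e=p$) and the paper glosses over that direction entirely.
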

\begin{proof}
    Every projection in $A$ is clearly compact.

    If $p$ is open and compact, then by definition there exists $a\in (A_p)^+$ such that $\overline{p}a=\overline{p}$.
    This implies that $p \leq\overline{p} \leq a \leq p$, whence $p = a \in A$.
\end{proof}

\begin{rema}
    Note that compactness was originally defined only for \emph{closed} projections in $A^{**}$ (see \ref{pargr:cpct_cld_proj}).
    In Definition \ref{defn:cpct_containment_opnProj} above we also defined a notion of compactness for \emph{open} projections in $A^{**}$ by assuming it to be compactly contained in itself.
    This should cause no confusion since, by Proposition \ref{prop:cpct_opn_proj_lies_in_A}, a \emph{compact, open} projection is automatically closed as well as compact in the sense defined for closed projections in \ref{pargr:cpct_cld_proj}.
\end{rema}

\noindent
    Now we can give a definition of \mbox{(sub-)}equivalence for open projections that we term Cuntz \mbox{(sub-)}equivalence, and which in the next section will be shown to agree with Cuntz \mbox{(sub-)}equivalence for positive elements and Hilbert modules in a \Cs. We warn the reader that our definition of Cuntz equivalence (below) does not agree with the notion carrying the same name defined by Lin in \cite{Lin10}. The latter was the one already studied by Peligrad and Zsid\'{o} that we (in Definition
\ref{defn:PZ-equiv}) have chosen to call Peligrad-Zsid\'{o} equivalence (or PZ-equivalence). Our definition below of Cuntz equivalence for open projections turns out to match the notion of Cuntz equivalence for positive elements, also when the \Cs{} does not have stable rank one.

\begin{defi}[Cuntz comparison of open projections]
\label{defn:Cu-comp_opnProj}
    Let $A$ be a \Cs, and let $p$ and $q$ be open projections in $A^{**}$.
    We say that $p$ is \emph{Cuntz subequivalent} to $q$, written $p\precsim_{\Cu} q$, if for every open projection $p'\ssubset p$ there exists an open projection $q'$ with $p'\sim_{\PZ} q'\ssubset q$.
    If $p\precsim_{\Cu} q$ and $q\precsim_{\Cu} p$ hold, then we say that $p$ and $q$ are \emph{Cuntz equivalent}, which we write as $p\sim_{\Cu} q$.
\end{defi}

%########################################################################################################################
%########################################################################################################################
\section{Comparison of positive elements and the corresponding relation on open projections}\label{section_comp_posi}

\noindent
We show in this section that the Cuntz comparison relation on positive elements corresponds to the Cuntz relation on the corresponding open projections. We also show that the Blackadar relation on positive elements, the Peligrad-Zsid\'{o} relation on their corresponding open projections, and isometric isomorphism of the corresponding Hilbert modules are equivalent.

\begin{pargr}[Hilbert modules]
    See \cite{AraPerToms} for a good introduction to Hilbert $A$-modules. Throughout this note all Hilbert modules are assumed to be right modules and countably generated.
 Let $A$ be a general \Cs.  We will denote by $\mathcal{H}(A)$ the set of all Hilbert $A$-modules.
    Every closed, right ideal in $A$ is in a natural way a Hilbert $A$-module. In particular, $E_a := \overline{aA}$ is a Hilbert $A$-module for every element $a$ in $A$.
    The assignment $a \mapsto E_a$ defines a natural map from the set of positive elements of $A$ to $\mathcal{H}(A)$.

If $E$ and $F$ are Hilbert $A$-modules, then $E$ is said to be \emph{compactly contained} in $F$, written $E \ssubset F$, if there exists a positive element $x$ in $\K(F)$, the compact operators of $\mathcal{L}(F)$, such
 that $xe = e$ for all $e \in E$.

For two Hilbert $A$-modules $E,F$ we say that $E\precsim_{\Cu} F$
($E$ is \emph{Cuntz subequivalent} to $F$) if for every Hilbert $A$-submodule $E' \ssubset E$ there exists $F'\ssubset F$ with $E'\cong F'$ (isometric isomorphism). Further declare $E \approx F$ (\emph{Cuntz equivalence}) if $E\precsim_{\Cu} F$ and $F\precsim_{\Cu} E$.
\end{pargr}

\noindent
Before relating the Blackadar relation with the Peligrad-Zsid\'{o} relation we prove the following lemma restating the Blackadar relation:

\begin{lem}
\label{lem:TFAE:equivalence}
    Let $A$ be a \Cs, and let $a$ and $b$ be positive elements in $A$. The following conditions are equivalent:
    \begin{enumerate}
        \item $a\sim_s b$,
        \item
        there exist $a',b'\in A^+$ with $a\cong a'\sim b'\cong b$,
       \item there exists $x \in A$ such that $A_a = A_{x^*x}$ and $A_b = A_{xx^*}$,
        \item
        there exists $b'\in A^+$ with $a\sim b'\cong b$,
        \item
        there exists $a'\in A^+$ with $a\cong a'\sim b$.
        \end{enumerate}
\end{lem}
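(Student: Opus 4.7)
Most of the implications are essentially definitional. First, (i)$\Leftrightarrow$(iii), since $a\cong x^*x$ is literally the statement $A_a=A_{x^*x}$. For (i)$\Leftrightarrow$(ii), take $a'=x^*x$, $b'=xx^*$ to get (i)$\Rightarrow$(ii); conversely, unpack $a'\sim b'$ as $a'=x^*x$, $b'=xx^*$ for some $x\in A$ and observe $a\cong a'=x^*x$ and $b\cong b'=xx^*$. Both (iv)$\Rightarrow$(ii) and (v)$\Rightarrow$(ii) are immediate by choosing $a'=a$, respectively $b'=b$. Thus only the two implications (ii)$\Rightarrow$(iv) and (ii)$\Rightarrow$(v) require real work.

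Both reduce to the following key claim, which is essentially the statement that one can ``push'' the relation $\cong$ past a Pedersen equivalence: \emph{if $c,c',d\in A^+$ satisfy $c\cong c'$ and $c'\sim d$, then there exists $d'\in A^+$ with $c\sim d'$ and $d'\cong d$.} To prove this, write $c'=x^*x$, $d=xx^*$, and let $x=v|x|$ be the polar decomposition in $A^{**}$. By Peligrad and Zsid\'{o}~\cite[Thm.~1.4]{PelZsi}, $v^*v=p_{c'}$, $vv^*=p_d$, and both $vA_{c'}\subseteq A$ and $v^*A_d\subseteq A$; consequently, conjugation by $v$ restricts to a $*$-isomorphism from $A_{c'}$ onto $A_d$. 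Since $c\cong c'$, we have $c^{1/2}\in A_c=A_{c'}$ and $p_c=p_{c'}$, so that $y:=vc^{1/2}$ lies in $A$. A direct computation gives
$$y^*y = c^{1/2}\,v^*v\,c^{1/2} = c^{1/2}\,p_{c'}\,c^{1/2} = c, \qquad yy^* = vcv^*.$$
Setting $d':=yy^*$, we have $c\sim d'$, and since $d'=vcv^*$ generates $vA_{c'}v^*=A_d$ as a hereditary sub-\Cs{} of $A$, we also have $d'\cong d$, proving the claim.

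From the claim, (ii)$\Rightarrow$(iv) is immediate: apply the claim with $c=a$, $c'=a'$, $d=b'$ to obtain $b''$ with $a\sim b''$ and $b''\cong b'$, hence $b''\cong b$ by transitivity of $\cong$. For (ii)$\Rightarrow$(v), rewrite $a'\sim b'$ as $b'\sim a'$ (Pedersen equivalence is symmetric) and apply the claim with $c=b$, $c'=b'$, $d=a'$ to obtain $a''$ with $b\sim a''$ and $a''\cong a'\cong a$, so that $a\cong a''\sim b$. The main obstacle lies entirely in the key claim, and more specifically in the fact that the polar isometry $v\in A^{**}$ of an element $x\in A$ has the nontrivial property $vA_{x^*x}\subseteq A$; without the Peligrad-Zsid\'{o} input, the natural candidate $y=vc^{1/2}$ is only known to live in $A^{**}$, and one would have no reason to expect $y^*y\sim yy^*$ to take place inside $A$.
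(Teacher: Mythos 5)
Your proof is correct and follows essentially the same route as the paper's: the nontrivial implications are handled by taking the polar decomposition $x=v|x|$ in $A^{**}$, using that $v$ implements a $^*$-isomorphism between the hereditary subalgebras $A_{x^*x}$ and $A_{xx^*}$ (the Peligrad--Zsid\'{o} property $vA_{x^*x}\subseteq A$), and transporting a strictly positive element across it. The only difference is organizational --- you isolate the transport step as a standalone claim and route through (ii) instead of (iii), and you make explicit the witness $y=vc^{1/2}$ and the computation $y^*y=c$, $yy^*=vcv^*$ that the paper leaves implicit.
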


\begin{proof}
    (ii) is just a reformulation of (i), and (iii) is a reformulation of (ii) keeping in mind that $A_c = A_d$ if and only if $c \cong d$.

(iv) $\Rightarrow$ (ii) and (v) $\Rightarrow$ (ii) are trivial.

(iii) $\Rightarrow$ (v): Take $x \in A$ such that $A_a = A_{x^*x}$ and $A_b = A_{xx^*}$. Let $x = v|x|$ be the polar decomposition for $x$ (with $v$ a partial isometry in $A^{**}$). Then $c \mapsto v^*cv$ defines an isomorphism from  $A_{xx^*} = A_b$ onto $A_{x^*x} = A_a$. This isomorphism maps the strictly positive element $b$ of $A_b$ onto a strictly positive element $a' = v^*bv$ of $A_a$ . Hence $b \sim a' \cong a$ as desired.

The proof of (iii) $\Rightarrow$ (iv) is similar.
\end{proof}

\noindent
The equivalence of (i) and (iv) in the proposition below was noted to hold in Lin's recent paper, \cite{Lin10}. We include a short proof of this equivalence for completeness.

\begin{prop}
\label{prop:TFAE:equivalence}
    Let $A$ be a \Cs, and let $a$ and $b$ be positive elements in $A$. The following conditions are equivalent:
    \begin{enumerate}
        \item $a\sim_s b$,
        \item $E_a$ and $E_b$ are isomorphic as Hilbert $A$-modules,
        \item there exists $x \in A$ such that $E_a = E_{x^*x}$ and $E_b = E_{xx^*}$,
        \item $p_a \sim_{\PZ} p_b$.
        \end{enumerate}
\end{prop}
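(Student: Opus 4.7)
The plan is to establish (i)$\iff$(iii)$\iff$(ii) and (iii)$\iff$(iv), leaning throughout on the canonical bijective correspondence between hereditary sub-\Cs s, closed right ideals, and open projections in $A^{**}$, under which $A_c \leftrightarrow E_c \leftrightarrow p_c$ for $c\in A^+$. In particular, for any positive $c,d\in A$ each of the conditions $c\cong d$, $A_c=A_d$, $E_c=E_d$, $p_c=p_d$ is equivalent to the others, and the same correspondence applies verbatim to $A_{x^*x}$, $E_{x^*x}$, $p_{x^*x}$ for any $x\in A$.

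For (i)$\iff$(iii), I would invoke Lemma \ref{lem:TFAE:equivalence}(iii), which identifies $a\sim_s b$ with the existence of $x\in A$ satisfying $A_a=A_{x^*x}$ and $A_b=A_{xx^*}$. Via the correspondence above, these equalities are the same as $E_a=E_{x^*x}$ and $E_b=E_{xx^*}$, which is exactly (iii). For (iii)$\iff$(iv), the same correspondence recasts (iii) as: there exists $x\in A$ with $p_a=p_{x^*x}$ and $p_b=p_{xx^*}$. Given such an $x$, the Peligrad-Zsid\'o Theorem 1.4 cited just before Proposition \ref{prop:PZ_implemented_by_left_right} supplies $p_{x^*x}\sim_{\PZ}p_{xx^*}$, i.e.\ $p_a\sim_{\PZ}p_b$, proving (iv). Conversely, given (iv), Proposition \ref{prop:PZ_implemented_by_left_right} (applied to the support projection $p_a$ of $a$) provides $x\in A$ with $p_a=p_{xx^*}$ and $p_b=p_{x^*x}$, and $x^*$ then witnesses (iii).

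The remaining equivalence (iii)$\iff$(ii) is the most hands-on. For (iii)$\Rightarrow$(ii), I use the polar decomposition $x=v|x|$ with $v\in A^{**}$ a partial isometry satisfying $v^*v=p_{x^*x}$ and $vv^*=p_{xx^*}$, and define $\phi\colon E_{x^*x}\to E_{xx^*}$ by $\phi(y)=vy$. The key point is that PZ Theorem 1.4 guarantees $vA_{x^*x}\subseteq A$; since $E_{x^*x}$ is the closed right ideal generated by $A_{x^*x}$, the map $\phi$ takes values in $A$. The identities $v^*v=p_{x^*x}$ and $vv^*=p_{xx^*}$ then make $\phi$ an $A$-linear isometric isomorphism with inverse $z\mapsto v^*z$. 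For (ii)$\Rightarrow$(iii), given an isomorphism $\phi\colon E_a\to E_b$, set $x:=\phi(a)\in E_b\subseteq A$. Then $x^*x=\langle\phi(a),\phi(a)\rangle=\langle a,a\rangle=a^2$, whence $E_{x^*x}=E_{a^2}=E_a$; while $A$-linearity and continuity of $\phi$ force $E_b=\phi(\overline{aA})=\overline{xA}$, which coincides with $E_{xx^*}=\overline{xx^*A}$ because both closed right ideals correspond to the open projection $p_{xx^*}$.

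The one place requiring actual care is the (iii)$\Rightarrow$(ii) step: the map $y\mapsto vy$ a priori only lives in $A^{**}$, and it is precisely the clause $vA_{x^*x}\subseteq A$ from PZ Theorem 1.4 that lets us conclude $\phi$ takes values in $A$, so that it truly implements a Hilbert $A$-module isomorphism.
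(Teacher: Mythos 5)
Your proposal is correct and follows essentially the same route as the paper: the same translation between hereditary subalgebras, right ideals, and open projections handles (i)$\Leftrightarrow$(iii); the Peligrad--Zsid\'o theorem and Proposition \ref{prop:PZ_implemented_by_left_right} handle the equivalence with (iv) (the paper states it as (i)$\Leftrightarrow$(iv) rather than (iii)$\Leftrightarrow$(iv), an immaterial difference); and the polar decomposition argument with $x=\Phi(a)$ for (ii)$\Leftrightarrow$(iii) is identical. Your extra care that $z\mapsto vz$ lands in $A$ is welcome, though it follows even more directly from $vE_{|x|}\subseteq\overline{xA}\subseteq A$ without invoking the Peligrad--Zsid\'o theorem.
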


\begin{proof}
    (i) $\Rightarrow$ (iv): As remarked earlier, it was shown in \cite[Theorem 1.4]{PelZsi}  that $p_{x^*x} \sim_{\PZ} p_{xx^*}$ for all $x \in A$. In other words, $a \sim b$ implies $p_a \sim_{\PZ}  p_b$. Recall also that $p_a = p_b$ when $a \cong b$. These facts prove the implication.

(iv) $\Rightarrow$ (i): If $p_a \sim_{\PZ} p_b$, then by Proposition \ref{prop:PZ_implemented_by_left_right}, there exists positive elements $a'$ and $b'$ in $A$ such that $p_a = p_{a'}$, $p_b = p_{b'}$, and $a' \sim b'$. Now, $p_a = p_{a'}$ and $p_b = p_{b'}$ imply that $a \cong a'$ and $b \cong b'$, whence (i) follows (see also Lemma \ref{lem:TFAE:equivalence} ).

 (ii) $\Rightarrow$ (iii):
    Let $\Phi \colon E_a\to E_b$ be an isomorphism of Hilbert $A$-modules, i.e., a bijective $A$-linear map preserving the inner product.
    Set $x:=\Phi(a)\in E_b$.
    Then
$$\overline{xA}=\overline{\Phi(a)A}=\overline{\Phi(aA)} = E_b,$$ whence $E_b=E_x=E_{xx^*}.$
    Since $\Phi$ preserves the inner product,
$$a^2 = \langle a,a\rangle_{E_a}=\langle\Phi(a),\Phi(a)\rangle_{E_b}=x^\ast x.$$
Hence $E_a = E_{a^2} = E_{x^*x}$ and $E_b = E_{xx^*}$.

    (iii) $\Rightarrow$ (ii):
    Let $x=v|x|$ be the polar decomposition of $x$ in $A^{\ast\ast}$.
    Note that $E_{|x|}=E_{x^\ast x}$ and $E_{xx^\ast}=E_{|x^*|}$.
    Define an isomorphism $E_{|x|}\to E_{|x^*|}$ by $z\mapsto vz$.

    (i) $\Leftrightarrow$ (iii):
    This follows from the one to one correspondence between hereditary sub-\Cs s and right ideals:
    A hereditary sub-\Cs{} $B$ corresponds to the right ideal $\overline{BA}$, and, conversely, a right ideal $R$ corresponds the hereditary algebra $R^*R$.
    In particular, $E_a=\overline{A_aA}$ and $A_a=E_a^*E_a$.

    If (i) holds, then, by Lemma \ref{lem:TFAE:equivalence},  $A_a=A_{x^*x}$ and $A_{xx^*}=A_b$ for some $x\in A$. This shows that
    $E_a=\overline{A_aA}=\overline{A_{x^*x}A}=E_{x^*x}$ and, similarly, $E_b=E_{xx^*}$.

    In the other direction, if $E_a=E_{x^*x}$ and $E_{xx^*}=E_b$ for some $x\in A$,
    then $A_a=E_a^*E_a=E_{x^*x}^*E_{x^*x}=A_{x^*x}$ and, similarly, $A_b=A_{xx^*}$, whence $a \sim_s b$.
\end{proof}

\begin{pargr} \label{comment}
It follows from the proof of (ii) $\Rightarrow$ (iii) of the proposition above that if $a$ is a positive element in a \Cs{} $A$ and if $F$ is a Hilbert $A$-module such that $E_a \cong F$, then $F = E_b$ for some positive element $b$ in $A$. In fact, if $\Phi \colon E_a \to F$ is an isometric isomorphism, then we can take $b$ to be $\Phi(a)$ as in the before mentioned proof.
\end{pargr}

\begin{pargr}
\label{prop:TFAE:inclusion_equality}
For any pair of positive elements $a$ and $b$ in a \Cs{} $A$ we have the following equivalences:
 $$ a\in A_b \iff  A_a\subseteq A_b \iff  E_a\subseteq E_b \iff p_a\leq p_b,$$
as well as the following equivalences:
  $$ a\in A_b \, \, \text{and} \, \, b\in A_a \iff  a\cong b \iff A_a=A_b \iff E_a=E_b \iff p_a=p_b.$$
\end{pargr}

\noindent
As a consequence of Proposition \ref{prop:TFAE:equivalence}, Lemma \ref{lem:TFAE:equivalence}, and the remark above we obtain the following proposition:

\begin{prop} \label{prop:TFAE:subequivalence}
    Let $A$ be a \Cs, and let $a$ and $b$ be positive elements in $A$. The following conditions are equivalent:
    \begin{enumerate}
        \item
        $a\precsim_s b$,
        \item
        there exists a Hilbert $A$-module $E'$ such that $E_a\cong E'\subseteq
        E_b$,
        \item
        there exists $x\in A$ with $E_a=E_{x^*x}$ and $E_{xx^*}\subseteq
        E_b$,
        \item
        $p_a\precsim_{\PZ} p_b$.
    \end{enumerate}
\end{prop}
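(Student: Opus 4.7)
The plan is to reduce each of (ii), (iii), (iv) to the unrelativised equivalence recorded in Proposition \ref{prop:TFAE:equivalence} applied to $a$ and a suitable auxiliary positive element $a' \in A_b^+$, then use the dictionary of \ref{prop:TFAE:inclusion_equality} to translate the condition ``$a' \in A_b$'' into its counterparts ``$A_{a'} \subseteq A_b$'', ``$E_{a'} \subseteq E_b$'', and ``$p_{a'} \le p_b$'' in the other three pictures. No new idea is required; this is a diagram chase among the already established correspondences.

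For (i) $\Rightarrow$ (iv), pick $a' \in A_b^+$ with $a \sim_s a'$. Proposition \ref{prop:TFAE:equivalence} yields $p_a \sim_{\PZ} p_{a'}$, and \ref{prop:TFAE:inclusion_equality} turns $a' \in A_b$ into $p_{a'} \le p_b$; thus $p_a \precsim_{\PZ} p_b$. Conversely, for (iv) $\Rightarrow$ (i), write $p_a \sim_{\PZ} p' \le p_b$ with $p'$ open. Since $p_a = p_a$ is the support projection of an element of $A$, Proposition \ref{prop:PZ_implemented_by_left_right} gives $a' \in A^+$ with $p' = p_{a'}$. Then Proposition \ref{prop:TFAE:equivalence} yields $a \sim_s a'$, while $p_{a'} \le p_b$ together with \ref{prop:TFAE:inclusion_equality} gives $a' \in A_b$, so $a \precsim_s b$.

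For (i) $\Rightarrow$ (iii), use Lemma \ref{lem:TFAE:equivalence} on $a \sim_s a'$ with $a' \in A_b^+$ to find $x \in A$ with $A_a = A_{x^*x}$ and $A_{a'} = A_{xx^*}$. Passing to right ideals via \ref{prop:TFAE:inclusion_equality} gives $E_a = E_{x^*x}$ and $E_{xx^*} = E_{a'} \subseteq E_b$. For (iii) $\Rightarrow$ (ii), set $E' := E_{xx^*}$; the polar-decomposition isomorphism $z \mapsto vz$ from the proof of Proposition \ref{prop:TFAE:equivalence} furnishes $E_a = E_{x^*x} \cong E_{xx^*} = E' \subseteq E_b$.

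Finally, (ii) $\Rightarrow$ (i): given $E_a \cong E' \subseteq E_b$, the comment in \ref{comment} produces $b' \in A^+$ with $E' = E_{b'}$; then $E_{b'} \subseteq E_b$ forces $b' \in A_b$ by \ref{prop:TFAE:inclusion_equality}, and $E_a \cong E_{b'}$ gives $a \sim_s b'$ by Proposition \ref{prop:TFAE:equivalence}, so $a \precsim_s b$. The only potential obstacle is ensuring in (ii) $\Rightarrow$ (i) that the abstract submodule $E'$ is itself of the form $E_{b'}$ for an element of $A$ (not merely of its multiplier algebra); this is exactly what \ref{comment} was included for, so once that observation is invoked the chain closes without further technicality.
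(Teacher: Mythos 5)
Your proof is correct and follows exactly the route the paper intends: the paper states this proposition without a written-out argument, asserting it to be a consequence of Proposition \ref{prop:TFAE:equivalence}, Lemma \ref{lem:TFAE:equivalence}, and \ref{prop:TFAE:inclusion_equality}, and your diagram chase (together with the use of Proposition \ref{prop:PZ_implemented_by_left_right} and \ref{comment} to realize the auxiliary projection and submodule as $p_{a'}$ and $E_{b'}$ for elements of $A$) is precisely the verification being left to the reader.
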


\begin{lem} \label{lm:unit}
Let $a$ and $e$ be positive elements in a \Cs{} $A$ and assume that $e$ is a contraction. Then the following equivalences hold:
$$ae = a \iff p_a e = p_a \iff \overline{p}_a e = \overline{p}_a.$$
\end{lem}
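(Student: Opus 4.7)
The plan is to establish the cycle of implications
\[
ae = a \;\Longrightarrow\; p_a e = p_a \;\Longrightarrow\; \overline{p}_a e = \overline{p}_a \;\Longrightarrow\; p_a e = p_a \;\Longrightarrow\; ae = a.
\]
Two of these are immediate from left-multiplication, using $p_a\overline{p}_a = p_a$ (which holds since $p_a \le \overline{p}_a$) and $ap_a = a$ (the defining property of the range projection): from $\overline{p}_a e = \overline{p}_a$, multiplying on the left by $p_a$ yields $p_a e = p_a \overline{p}_a e = p_a \overline{p}_a = p_a$, and from $p_a e = p_a$, multiplying on the left by $a$ yields $ae = a p_a e = a p_a = a$.

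For $ae = a \Rightarrow p_a e = p_a$ I would apply continuous functional calculus to $a$. Iterating the relation $ae = a$ gives $a^n e = a^n$ for all $n \ge 1$, so $P(a)e = P(a)$ for every polynomial $P$ with $P(0)=0$, and by density also $f(a) e = f(a)$ for every $f \in C_0(\sigma(a))$. Specializing to $f(t) = t^{1/n}$ and letting $n \to \infty$, since $a^{1/n} \to p_a$ in the strong operator topology we obtain $p_a e = p_a$.

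The crux is the implication $p_a e = p_a \Rightarrow \overline{p}_a e = \overline{p}_a$. Here I would consider the spectral projection $q := \chi_{\{1\}}(e) \in A^{**}$. Since $0 \le e \le 1$, the net $\{e^n\}_{n \ge 1}$ is decreasing with strong limit $q$, so $q$ is the infimum in $A^{**}$ of a decreasing net of positive elements of $A$; by Pedersen's characterization of closed projections as precisely the (strongly) upper semicontinuous projections, i.e.\ decreasing SOT-limits of nets in $A_{\mathrm{sa}}$ (see \cite{Ped}), the projection $q$ is closed. Moreover $qe = q$ by functional calculus. Iterating $p_a e = p_a$ yields $p_a e^n = p_a$ for every $n$, and passing to the SOT-limit gives $p_a q = p_a$, so $p_a \le q$. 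As $q$ is a closed projection dominating $p_a$, the definition of $\overline{p}_a$ gives $\overline{p}_a \le q$, and therefore
\[
\overline{p}_a e = \overline{p}_a q e = \overline{p}_a q = \overline{p}_a.
\]

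The main delicate point is to justify that $q = \chi_{\{1\}}(e)$ is closed. A first instinct would be to take the complement of the range projection $p_{1-e}$ of $1-e$ as a closed projection containing $p_a$ and orthogonal to $1-e$; this works straightforwardly when $A$ is unital, since then $1-e \in A^+$ and $p_{1-e}$ is open by definition. When $A$ is non-unital $1-e$ no longer lies in $A$, so the openness of $p_{1-e}$ is not evident from first principles. Routing through $q = \chi_{\{1\}}(e)$ and invoking Pedersen's intrinsic characterization of closed projections as decreasing SOT-limits of self-adjoint elements of $A$ bypasses this issue uniformly.
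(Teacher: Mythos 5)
Your proof is correct, and its core coincides with the paper's: both reduce the one nontrivial implication to showing that the spectral projection $q=\chi_{\{1\}}(e)$ is closed, then deduce $\overline{p}_a\le q$ and hence $\overline{p}_a e=\overline{p}_a q e=\overline{p}_a q=\overline{p}_a$. The difference lies in how closedness of $q$ is justified. The paper notes that $1-q$ is the range projection of $1-e$ and hence open; as you rightly observe, for non-unital $A$ this implicitly uses that range projections of positive elements of $\widetilde{A}$ are open, which is true but deserves the extra remark that $\overline{(1-e)A(1-e)}$ is a hereditary sub-\Cs{} of $A$ whose associated open projection is exactly $1-q$. Your alternative---realizing $q$ as the decreasing SOT-limit of the sequence $e^n\in A^+$---works, but the characterization you invoke is overstated: in the non-unital case the projections arising as decreasing SOT-limits of nets from $A_{\mathrm{sa}}$ are precisely the \emph{compact} (closed) projections, a proper subclass of the closed ones (general closed projections are decreasing limits from $\widetilde{A}_{\mathrm{sa}}$). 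Only the implication you actually use (such a limit is a closed projection) is needed, and it is true, so this is a citation imprecision rather than a gap. A minor structural difference: you prove $ae=a\Rightarrow p_ae=p_a$ separately by functional calculus and then pass from $p_ae=p_a$ to $\overline{p}_ae=\overline{p}_a$, whereas the paper goes from $ae=a$ directly to $\overline{p}_ae=\overline{p}_a$ (via orthogonality of $p_a$ and $p_{1-e}$) and recovers the middle condition from the trivial implications; both cycles are complete.
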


\begin{proof} The two "$\Leftarrow$"-implications are trivial. Suppose that $ae=a$. Let $\chi$ be indicator function for the singleton $\{1\}$, and put $q = \chi(e) \in A^{**}$. Then $qe = q$ and $q$ is the largest projection in $A^{**}$ with this property. Moreover, $q$ is the projection onto the kernel of $1-e$, hence $1-q$ is the projection onto the range of $1-e$, i.e., $1-q = p_{1-e}$. This shows that $q$ is a closed projection. As $a$ and $1-e$ are orthogonal so are their range projections $p_a$ and $p_{1-e}$, whence $p_a \le 1-p_{1-e} = q$. Thus $\overline{p}_a \le q$. This shows that $\overline{p}_a e = \overline{p}_a$.
\end{proof}

\begin{lem}
\label{prop:lma1}
    Let $A$ be a \Cs, and let $e$ and $a$ be positive elements in $A$.
    If $ae=a$, then $\overline{p}_a\leq p_e$.
\end{lem}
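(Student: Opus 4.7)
I plan to reduce to the situation of Lemma~\ref{lm:unit}, which handles the case that the right-multiplier is a positive contraction. To that end, set $\tilde e := f(e) \in A^{+}$, where $f(t) := \min(t,1)$; by construction $\tilde e$ is a positive contraction in $A$, and the main technical point of the reduction is to verify that the hypothesis transfers, namely that $a\tilde e = a$.

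For this step, note first that $ae = a$ together with the self-adjointness of $a$ and $e$ forces $ea = (ae)^{*} = a$, so $a$ and $e$ commute, and the $C^{*}$-subalgebra they generate can be identified with some $C_{0}(X)$. In $C_{0}(X)$ the relation $ae = a$ reads $a(x)\bigl(e(x)-1\bigr) = 0$ for all $x \in X$; at any point where $a(x) \neq 0$ one has $e(x) = 1$ and hence $\tilde e(x) = 1$, so $a(x)\tilde e(x) = a(x)$ pointwise, i.e.\ $a\tilde e = a$.

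With $a\tilde e = a$ and $\tilde e$ a positive contraction, Lemma~\ref{lm:unit} gives $\overline{p}_{a}\tilde e = \overline{p}_{a}$, and taking adjoints produces $\tilde e\,\overline{p}_{a} = \overline{p}_{a}$; thus $\tilde e$ fixes every vector in the range of $\overline{p}_{a}$, so each such vector lies in the range of $\tilde e$, whence $\overline{p}_{a} \le p_{\tilde e}$. Since $f$ is strictly positive on $(0,\infty)$, the Borel functional calculus identifies $p_{\tilde e} = \chi_{(0,\|e\|]}(e) = p_{e}$, and combining yields $\overline{p}_{a} \le p_{e}$. The only real obstacle is the reduction carried out in the second paragraph; everything downstream of it is formal.
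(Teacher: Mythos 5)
Your proof is correct and follows essentially the same route as the paper: replace $e$ by the contraction $f(e)$ with $f(t)=\min\{t,1\}$, check the hypothesis transfers, and then apply Lemma~\ref{lm:unit}. You simply spell out the details the paper leaves implicit (and you correctly use $\min\{t,1\}$ where the paper's printed $\max\{t,1\}$ is evidently a typo, since that function would not produce a contraction in $A$).
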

\begin{proof}
 Upon replacing $e$ with $f(e)$, where $f \colon \R^+ \to \R^+$ is given by $f(t) = \max\{t,1\}$, we may assume that $e$ is a contraction. If $ae=a$, then $\overline{p}_a e = \overline{p}_a$ by Lemma \ref{lm:unit}, and this implies that $\overline{p}_a\leq p_e$.
\end{proof}

\noindent We show below that the two previously defined notions of compact containment agree. To do so we introduce a third notion of compact containment:

\begin{defi} \label{def:a<<b}
    Let $a$ and $b$ be positive elements in a \Cs.
    Then $a$ is said to be compactly contained in $b$, written $a \ssubset b$, if and only if there exists a positive element $e$ in $A_b$ such that $ea = a$.
\end{defi}

\noindent Following the proof of Lemma \ref{prop:lma1} , the element $e$ above can be assumed to be a contraction.

\begin{prop}
\label{prop:TFAE:cpct_containment}
    Let $A$ be a \Cs, let $b$ be a positive element in $A$, and let $a$ be a positive element in $A_b$.  Then the following statements are equivalent:
    \begin{enumerate}
        \item
        $E_a\ssubset E_b$,
        \item
        $a \ssubset b$,
       \item
        $p_a\ssubset p_b$,
        \item
        $\overline{p}_a \leq p_b$ and $\overline{p}_a$ is compact in $A$.
    \end{enumerate}
\end{prop}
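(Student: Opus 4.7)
My plan is to establish the four equivalences in the short cycle (i)$\Leftrightarrow$(ii), (ii)$\Leftrightarrow$(iii), (ii)$\Rightarrow$(iv), (iv)$\Rightarrow$(ii), where the first three steps follow essentially from the definitions once they are translated through Lemmas~\ref{lm:unit} and~\ref{prop:lma1}, and only the final step requires substantive input.

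For (i)$\Leftrightarrow$(ii) I will use the canonical identification $\K(E_b)\cong A_b$ obtained by letting $A_b$ act on $E_b=\overline{bA}$ by left multiplication. A positive $x\in\K(E_b)$ acts as the identity on all of $E_a=\overline{aA}$ if and only if $xa=a$, which is exactly Definition~\ref{def:a<<b}. For (ii)$\Leftrightarrow$(iii), given $e\in A_b^+$ with $ea=a$, I would pass to $\min(e,1)$ via functional calculus to assume $e$ is a contraction; Lemma~\ref{lm:unit} then reformulates $ea=a$ as $\overline{p}_a e=\overline{p}_a$, and since $e\in A_b^+$ has norm one this is precisely the statement that $\overline{p}_a$ is compact in $A_b$, i.e.\ $p_a\ssubset p_b$. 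The converse reverses this reading. The implication (ii)$\Rightarrow$(iv) is then immediate: the same contraction $e$ witnesses compactness of $\overline{p}_a$ in $A$, while Lemma~\ref{prop:lma1} gives $\overline{p}_a\le p_e\le p_b$ since $e\in A_b$.

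The real content lies in (iv)$\Rightarrow$(ii). Compactness of $\overline{p}_a$ in $A$ together with the characterisation recalled in~\ref{pargr:cpct_cld_proj} produces a positive contraction $c\in A^+$ with $\overline{p}_a\le c$, but $c$ need not belong to $A_b$: cutting it down by an approximate unit $u_\alpha$ of $A_b$ gives $u_\alpha^{1/2}cu_\alpha^{1/2}\in A_b^+$ converging strongly to $p_bcp_b\ge\overline{p}_a$, and no single approximant need dominate $\overline{p}_a$. To bypass this obstacle I would invoke Akemann's noncommutative Urysohn lemma applied to the pair $(\overline{p}_a,p_b)$ of a compact closed and an open projection satisfying $\overline{p}_a\le p_b$: it yields a positive contraction $e\in A_b^+$ with $\overline{p}_a\le e\le p_b$. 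Working inside $A_b^{\ast\ast}=p_bA^{\ast\ast}p_b$, whose unit is $p_b$, a standard $2\times 2$ block decomposition relative to the projection $\overline{p}_a$ shows that any positive contraction dominating a projection $p$ must satisfy $ep=p$; hence $e\overline{p}_a=\overline{p}_a$, and Lemma~\ref{lm:unit} upgrades this to $ea=a$, establishing (ii). The main obstacle of the whole proof, then, is this single passage from a witness for compactness living in $A$ to one living inside the hereditary subalgebra $A_b$, which seems to require (or at least most cleanly uses) the noncommutative Urysohn lemma rather than elementary approximation.
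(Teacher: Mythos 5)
Your proof is correct and follows essentially the same route as the paper: (i)$\Leftrightarrow$(ii) via the identification $\K(E_b)\cong A_b$, (ii)$\Leftrightarrow$(iii) via Lemma~\ref{lm:unit}, (ii)$\Rightarrow$(iv) via Lemma~\ref{prop:lma1}, and the remaining implication via Akemann's theory of compact projections. The only cosmetic difference is that for (iv)$\Rightarrow$(ii) the paper simply cites \cite[Lemma 2.5]{AkeAndPed}, whereas you re-derive that statement from the noncommutative Urysohn lemma together with the (correct) observation that a positive contraction dominating a projection must act as the identity on it.
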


\begin{proof}
    (i) $\Leftrightarrow$ (ii): By definition, (i) holds if and only if there exists a positive element $e$ in $\K(E_b)$, such that $e$ acts as the identity on $E_a$. We can identify $\K(E_b)$ with $A_b$, as elements of the latter act on $E_b$ by left-multiplication. Thus (i) is equivalent to the existence of a positive element $e$ in $A_b$ such that $ex=x$ for all $x\in E_a =\overline{aA}$.
    The latter condition is fulfilled precisely if $ea=a$.

    (ii) $\Leftrightarrow$ (iii): (iii) holds if and only if there exists a positive element $e$ in $A_b$ such that $\overline{p}_a e = \overline{p}_a$; and (ii) holds if and only if there exists a positive element $e$ in $A_b$ such that $ae=a$. In both cases $e$ can be taken to be a contraction, cf.\ the proof of Lemma \ref{prop:lma1}. The bi-implication now follows from Lemma \ref{lm:unit}.

    (ii) and (iii) $\Rightarrow$ (iv): If $a \ssubset b$, then there is a positive contraction $e$ in $A_b$ such that $ae=a$. By Lemma \ref{prop:lma1} this implies that $\overline{p}_a\le p_e \le p_b$. From (iii) we have that $\overline{p}_a$ is compact in $A_b$ which entails that $\overline{p}_a$ also is compact in $A$.

    (iv) $\Rightarrow$ (iii): This is \cite[Lemma 2.5]{AkeAndPed}.
\end{proof}

\begin{rema}
    In many cases it is automatic that $\overline{p}$ is compact, and then $p\ssubset q$ is equivalent to the condition $\overline{p}\leq q$.
    For example, if $A$ is unital, then all closed projections in $A^ {**}$ are compact.
    More generally if $a\in A^+$ sits in some corner of $A$, then $\overline{p}_a$ is compact.
\end{rema}

\begin{lem} \label{lem:a}
Let $a$ be a positive element in a \Cs{} $A$.
\begin{enumerate}
\item If $E'$ is a Hilbert $A$-module that is compactly contained in $E_a$, then $E' \subseteq E_{(e-\ep)_+}$ for some positive element $e \in A_a$ and some $\ep > 0$.
\item If $q,q'$ are open projections in $A^{**}$ such that $q'$ is compactly contained in $q$, then $q' \le p_{(e-\ep)_+}$ for some positive element $e \in A_q$ and some $\ep > 0$
\end{enumerate}
\end{lem}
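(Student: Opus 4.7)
The plan for both parts is the same: extract from the compact-containment hypothesis a positive contraction that acts as the identity on the relevant ``$1$-eigenspace,'' then apply continuous functional calculus to cut it down by some $\ep > 0$ while retaining that eigenspace.

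For (i), the Hilbert module definition of compact containment furnishes a positive element $x \in \K(E_a)$ with $xy = y$ for every $y \in E'$. Under the canonical identification $\K(E_a) \cong A_a$ (with $A_a$ acting on $E_a$ by left multiplication) we have $x \in A_a^+$. Fix any $\ep \in (0,1)$. Since $x^n y = y$ for all $n \geq 0$, polynomial approximation on $\sigma(x)$ gives $g(x) y = g(1) y$ for every continuous $g$ (note that $1 \in \sigma(x)$ whenever $E' \neq 0$, since otherwise $x - 1$ would be invertible in $\widetilde{A^{**}}$ and force $y = 0$). Applied to $g(t) = (t - \ep)_+$, this yields $(x - \ep)_+ y = (1 - \ep) y$, so
$$ y \;=\; (1-\ep)^{-1} (x-\ep)_+ y \;\in\; (x-\ep)_+ A \;\subseteq\; E_{(x-\ep)_+}. $$
Setting $e := x$ then gives the conclusion.

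Part (ii) is essentially a transcription of the same argument. Definition \ref{defn:cpct_containment_opnProj} supplies a positive contraction $e \in A_q$ with $\overline{q'} e = \overline{q'}$; taking adjoints one gets $e \overline{q'} = \overline{q'}$, which means $e\xi = \xi$ for every $\xi$ in the range of $\overline{q'}$ in $H_u$. The same functional calculus step produces $(e-\ep)_+ \xi = (1-\ep)\xi$ for such $\xi$, so the range of $\overline{q'}$ is contained in the range of $(e-\ep)_+$, i.e., in the range of the open projection $p_{(e-\ep)_+}$. Hence $q' \leq \overline{q'} \leq p_{(e-\ep)_+}$, with $e \in A_q^+$ as required.

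I do not anticipate any serious obstacle: the only point that deserves a careful check is the spectral identity $g(x) y = g(1) y$ (and its analogue for $e$ in part (ii)), which is routine once one observes that any nontrivial $1$-eigenvector automatically forces $1$ to lie in the spectrum of the operator.
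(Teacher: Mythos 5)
Your argument is correct and is essentially the paper's own proof: both parts extract the positive element $e$ witnessing compact containment and use functional calculus (the identity $g(e)x=g(1)x$, which the paper applies concretely with $g(t)=(t-1/2)_+$ to get $(e-1/2)_+x=\tfrac12 x$) to conclude $E'\subseteq E_{(e-\ep)_+}$, respectively $q'\le \overline{q'}\le p_{(e-\ep)_+}$. Your extra care about $1\in\sigma(e)$ and the choice of a general $\ep$ are fine but do not change the route.
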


\begin{proof}
(i): By definition there is a positive element $e$ in $\K(E_a) = A_a$ such that $ex = x$ for all $x \in E'$. This implies that $(e-1/2)_+x = \frac{1}{2}x$ for all $x \in E'$, whence $E' \subseteq E_{(e-1/2)_+}$.

(ii): If $q'$ is compactly contained in $q$, then there is a positive element $e$ in $A_{q}$ such that $q'e = q'$ (in fact such that $\overline{q'} e = \overline{q'}$). It follows that $q' (e-1/2)_+ = \frac{1}{2} q'$, and hence that $q' \le  p_{(e-1/2)_+}$.
\end{proof}

\begin{prop}
\label{prop:TFAE:Cuntz_subequivalence}
    Let $a$ and $b$ be positive elements in a \Cs{} $A$.
    Then the following statements are equivalent:
    \begin{enumerate}
        \item
        $a\precsim b$.
        \item
        $E_a\precsim_{\Cu} E_b$.
        \item
        $p_a \precsim_{\Cu} p_b$.
    \end{enumerate}
\end{prop}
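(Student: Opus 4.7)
My plan is to establish the cyclic chain (i) $\Rightarrow$ (iii) $\Rightarrow$ (ii) $\Rightarrow$ (i), translating between positive elements, Hilbert modules, and open projections using the three correspondences already in place: Proposition~\ref{prop:TFAE:equivalence} (PZ-equivalence, isomorphism of Hilbert modules, and Blackadar equivalence), Proposition~\ref{prop:PZ_implemented_by_left_right} (realization of a PZ-equivalence by an element $x\in A$), and Proposition~\ref{prop:TFAE:cpct_containment} (the three equivalent formulations of compact containment).

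For (i) $\Rightarrow$ (iii), fix $p'\ssubset p_a$. I choose a positive contraction $e\in A_a$ with $\overline{p'}e=\overline{p'}$, which yields $\overline{p'}\leq p_{(e-\varepsilon)_+}$ for every $\varepsilon\in(0,1)$. Since $e\precsim a\precsim b$, the usual R\o rdam-type consequence of Cuntz comparison produces $\delta>0$ and $y\in A$ with $y^\ast y=(e-\varepsilon)_+$ and $yy^\ast\in A_{(b-\delta)_+}$; let $y=v|y|$ be the polar decomposition in $A^{\ast\ast}$. I set $q':=vp'v^\ast$ and verify three things. First, $q'$ is open: an approximate unit $(a_\alpha)$ of $A_{p'}\subseteq A_{y^\ast y}$ satisfies $p'=\sup_\alpha a_\alpha$ strongly, so $q'=\sup_\alpha va_\alpha v^\ast$ with each $va_\alpha v^\ast\in A_{yy^\ast}\subseteq A$. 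Second, $w:=vp'$ implements the PZ-equivalence $p'\sim_{\PZ}q'$; the only nontrivial check is $p'v^\ast b=v^\ast b$ for $b\in A_{q'}$, which follows from $b=q'b$ via $v^\ast b=v^\ast(vp'v^\ast)b=p'v^\ast b$. Third, $q'\ssubset p_b$: Proposition~\ref{prop:TFAE:cpct_containment} upgrades $p'\ssubset p_a$ together with $\overline{p'}\leq p_{(e-\varepsilon)_+}$ to $p'\ssubset p_{(e-\varepsilon)_+}$, producing a positive contraction $f\in A_{(e-\varepsilon)_+}$ with $\overline{p'}\leq f$; then $vfv^\ast\in A_{yy^\ast}\subseteq A_b$ is a contraction with $q'\leq vfv^\ast$, and Lemma~\ref{lm:unit} upgrades $q'\cdot vfv^\ast=q'$ to $\overline{q'}\cdot vfv^\ast=\overline{q'}$, so $\overline{q'}$ is compact and $\overline{q'}\leq vfv^\ast\leq p_b$.

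For (iii) $\Rightarrow$ (ii), given $E'\ssubset E_a$ I use Lemma~\ref{lem:a}(i) to enclose $E'\subseteq E_{(e-\varepsilon)_+}$ for some $e\in A_a^+$ and $\varepsilon>0$; since $(e-\varepsilon)_+\ssubset a$ it follows that $p_{(e-\varepsilon)_+}\ssubset p_a$ by Proposition~\ref{prop:TFAE:cpct_containment}. Now (iii) delivers an open $q'\ssubset p_b$ with $p_{(e-\varepsilon)_+}\sim_{\PZ}q'$, and Proposition~\ref{prop:PZ_implemented_by_left_right} provides $x\in A$ with $p_{(e-\varepsilon)_+}=p_{xx^\ast}$ and $q'=p_{x^\ast x}$; hence $E_{xx^\ast}=E_{(e-\varepsilon)_+}$ and $E_{x^\ast x}\ssubset E_b$. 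The polar decomposition of $x$ induces a Hilbert module isomorphism $E_{xx^\ast}\to E_{x^\ast x}$ whose restriction sends $E'$ to a submodule $F'\ssubset E_b$ with $F'\cong E'$. For (ii) $\Rightarrow$ (i), fix $\varepsilon>0$; since $(a-\varepsilon)_+\ssubset a$, Proposition~\ref{prop:TFAE:cpct_containment} gives $E_{(a-\varepsilon)_+}\ssubset E_a$, so (ii) produces $F'\ssubset E_b$ with $E_{(a-\varepsilon)_+}\cong F'$. By the observation in~\ref{comment}, $F'=E_c$ for some $c\in A^+$; then Proposition~\ref{prop:TFAE:equivalence} gives $(a-\varepsilon)_+\sim_s c$ and Proposition~\ref{prop:TFAE:cpct_containment} gives $c\ssubset b$, whence $(a-\varepsilon)_+\precsim c\precsim b$. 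Letting $\varepsilon\to 0$ yields $a\precsim b$.

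The main obstacle is the verification in (i) $\Rightarrow$ (iii) that the candidate projection $q'=vp'v^\ast$ is simultaneously open, PZ-equivalent to $p'$, and compactly contained in $p_b$. None of these properties is automatic; each requires carefully tracking how the partial isometry $v$ from the polar decomposition of $y$ interacts with closures, support projections, and hereditary sub-$C^\ast$-algebras in the bidual $A^{\ast\ast}$.
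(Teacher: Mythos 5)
Your argument is correct, but it follows a genuinely different route from the paper's. The paper does not prove (i) $\Leftrightarrow$ (ii) at all: it cites this equivalence from the appendix of \cite{CowEllIva} (see also \cite[Theorem 4.33]{AraPerToms}) and then only establishes (ii) $\Leftrightarrow$ (iii), in both directions by the same shuttle you use for (iii) $\Rightarrow$ (ii) --- Lemma \ref{lem:a}, Proposition \ref{prop:TFAE:cpct_containment}, Proposition \ref{prop:PZ_implemented_by_left_right}/\ref{comment}, and Proposition \ref{prop:TFAE:equivalence}. You instead close a cycle (i) $\Rightarrow$ (iii) $\Rightarrow$ (ii) $\Rightarrow$ (i), which in effect re-proves the Coward--Elliott--Ivanescu equivalence rather than quoting it: your (ii) $\Rightarrow$ (i) is a clean direct argument, and your (i) $\Rightarrow$ (iii) is the genuinely new piece, built on the standard R{\o}rdam lemma (for $a\precsim b$ and $\ep>0$ there are $\delta>0$ and $y$ with $y^*y=(a-\ep)_+$ and $yy^*\in A_{(b-\delta)_+}$) together with a careful analysis of $q'=vp'v^*$ for the polar-decomposition isometry $v$ of $y$. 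That lemma is not in the paper, so your proof imports one external (though completely standard) ingredient in exchange for not importing \cite{CowEllIva}; the payoff is a self-contained construction of the PZ-equivalence witnessing $p_a\precsim_{\Cu}p_b$, at the cost of the bookkeeping you rightly flag as the main obstacle. Two small points in that bookkeeping deserve to be made explicit: the verification that $w=vp'$ satisfies $wA_{p'}\subseteq A$ rests on $vA_{y^*y}\subseteq A$ and $v^*A_{yy^*}\subseteq A$, which is exactly the Peligrad--Zsid\'{o} theorem \cite[Theorem 1.4]{PelZsi} quoted before Proposition \ref{prop:PZ_implemented_by_left_right}, so it should be cited there; and Lemma \ref{lm:unit} is stated for range projections $p_a$ of elements $a\in A^+$, whereas you apply the implication $q'c=q'\Rightarrow\overline{q'}c=\overline{q'}$ to an arbitrary open projection $q'$ --- the lemma's proof (via the closed projection $\chi_{\{1\}}(c)$) goes through verbatim in that generality, but as written you are using a slightly stronger statement than the one proved. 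Neither point is a gap, only a citation to be added.
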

\begin{proof}
 The equivalence of (i) and (ii) was first shown in \cite[Appendix]{CowEllIva}, see also
\cite[Theorem 4.33]{AraPerToms}.

(ii) $\Rightarrow$ (iii): Suppose that $E_a\precsim_{\Cu} E_b$, and let $p'$ be an arbitrary open projection in $A^{**}$ which is compactly contained in $p_a$. Then, by Lemma \ref{lem:a}, $p' \le p_{(e-\ep)_+}$ for some positive element $e$ in $A_a$ and some $\ep > 0$. Notice that $(e-\ep)_+ \ssubset a$.
It follows from Proposition \ref{prop:TFAE:cpct_containment} that $E_{(e-\ep)_+}$ is compactly contained in $E_a$. Accordingly, $E_{(e-\ep)_+} \cong F'$ for some Hilbert $A$-module $F'$ that is compactly contained in $E_b$. By \ref{comment}, $F' = E_c$ for some positive element $c$ in $A$. It now follows from Proposition \ref{prop:TFAE:cpct_containment} and from Proposition \ref{prop:TFAE:equivalence} that
$$p' \le p_{(e-\ep)_+}  \sim_{\PZ} p_c \ssubset p_b.$$
This shows that $p_a \precsim_{\Cu} p_b$.

(iii) $\Rightarrow$ (ii): Suppose that $p_a\precsim_{\Cu} p_b$, and let $E'$ be an arbitrary Hilbert $A$-module which is compactly contained in $E_a$. Then, by Lemma \ref{lem:a}, $E' \subseteq E_{(e-\ep)_+}$ for some positive element $e$ in $A_a$ and some $\ep > 0$.
It follows from Proposition \ref{prop:TFAE:cpct_containment} that $p_{(e-\ep)_+}$ is compactly contained in $p_a$. Accordingly, $p_{(e-\ep)_+} \sim_{\PZ} q'$ for some open projection $q'$ in $A^{**}$ that is compactly contained in $p_b$. By Proposition \ref{prop:PZ_implemented_by_left_right}, $q' = p_c$ for some positive element $c$ in $A$. It now follows from Proposition \ref{prop:TFAE:cpct_containment} and from Proposition \ref{prop:TFAE:equivalence} that
$$E' \subseteq E_{(e-\ep)_+}  \cong E_c \ssubset E_b.$$
This shows that $E_a \precsim_{\Cu} E_b$.
\end{proof}

\noindent By the definition of Cuntz equivalence of positive elements, Hilbert $A$-modules, and of open projections, the proposition above immediately implies the following:

\begin{corol}
\label{cor:TFAE:Cuntz_equivalence}
 For every pair of positive elements $a$ and $b$ in a \Cs{} $A$ we have the following equivalences:
 $$a\approx b \iff E_a\approx E_b \iff p_a \sim_{\Cu} p_b.$$
\end{corol}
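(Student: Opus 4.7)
The plan is to observe that this corollary is essentially a formal consequence of Proposition \ref{prop:TFAE:Cuntz_subequivalence} together with the fact that each of the three equivalence relations in the statement is, by definition, the symmetrization of the corresponding subequivalence relation. Concretely: $a\approx b$ is defined as $a\precsim b$ together with $b\precsim a$; $E_a\approx E_b$ is defined as $E_a\precsim_{\Cu} E_b$ together with $E_b\precsim_{\Cu} E_a$; and $p_a\sim_{\Cu} p_b$ is defined as $p_a\precsim_{\Cu} p_b$ together with $p_b\precsim_{\Cu} p_a$.

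First I would apply Proposition \ref{prop:TFAE:Cuntz_subequivalence} to the pair $(a,b)$, which yields the equivalence of $a\precsim b$, $E_a\precsim_{\Cu} E_b$, and $p_a\precsim_{\Cu} p_b$. Then I would apply the same proposition, this time to the pair $(b,a)$, which yields the equivalence of $b\precsim a$, $E_b\precsim_{\Cu} E_a$, and $p_b\precsim_{\Cu} p_a$. Conjoining the two chains of equivalences gives exactly the three equivalences stated in the corollary.

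There is no real obstacle here: once Proposition \ref{prop:TFAE:Cuntz_subequivalence} is in hand, the corollary is a one-line deduction. The only thing to be careful about is that the notation for equivalence differs in the three settings ($\approx$ for positive elements and Hilbert modules, $\sim_{\Cu}$ for open projections), but in each case the symbol is defined as two-sided subequivalence, so matching them up is immediate.
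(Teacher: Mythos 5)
Your proof is correct and is exactly the paper's argument: the authors state the corollary as an immediate consequence of Proposition \ref{prop:TFAE:Cuntz_subequivalence} together with the fact that each equivalence is by definition the conjunction of the two subequivalences. Nothing further is needed.
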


\noindent
    We conclude this section by remarking that the pre-order $\precsim_{\PZ}$ on the open projections is not algebraic (unlike the situation for Murray-von Neumann subequivalence).
    Indeed, if $p$ and $q$ are open projections $A^{**}$ with $p\leq q$, then $q-p$ need not be an open projection.
    For the same reason, $\precsim_{\Cu}$ is not an algebraic order.
    However, Cuntz comparison is approximately algebraic in the following sense.

\begin{prop}
    Let $A$ be a \Cs{}, and let $p,p',q\in A^{**}$ be open projections with $p'\ssubset p\precsim_{\Cu} q$.
    Then there exists an open projection $r\in A^{**}$ such that $p'\oplus r\precsim_{\Cu} q\precsim_{\Cu} p\oplus r$ .
\end{prop}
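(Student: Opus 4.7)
My plan is to translate the statement, via Proposition \ref{prop:TFAE:Cuntz_subequivalence} and its matrix-algebra analogue, into a Cuntz-semigroup inequality in $\Cu(A)$, and to construct $r$ as the support projection $p_c$ of an explicit positive element $c \in A^+$ built from R\o{}rdam's lemma.

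\emph{Reductions.} By Lemma \ref{lem:a}(ii), pick a contraction $e \in A_p$ and $\ep_0 > 0$ with $p' \le p_{(e-\ep_0)_+}$. Since enlarging $p'$ only strengthens the sought inequality $p' \oplus r \precsim_{\Cu} q$ (as $\le$ implies $\precsim_{\Cu}$ for open projections), I may replace $p'$ by $p_{(e-\ep_0)_+}$. Choosing any $\ep < \ep_0$ gives $p' \le p_{(e-\ep)_+}$, so I may WLOG assume $p' = p_{(e-\ep)_+}$ with a bit of slack in $\ep$. Passing to a separable $C^*$-subalgebra of $A$ containing the relevant data, reduce to the case $p = p_a$ and $q = p_b$ for contractions $a, b \in A^+$; then $a \precsim b$ by Proposition \ref{prop:TFAE:Cuntz_subequivalence}.

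\emph{Construction.} Since $e \in A_a$, the element $(e-\ep)_+$ is compactly contained in $a$ (with witness a suitable continuous function of $e$), whence $(e-\ep)_+ \precsim b$. By the R\o{}rdam--Kirchberg lemma, after expending the slack in $\ep$ there exist $\delta > 0$ and $z \in A$ with $(e-\ep)_+ = z^*(b-\delta)_+ z$. Setting $y := (b-\delta)_+^{1/2} z$ yields $y^*y = (e-\ep)_+$ and $yy^* \in A_b$. Define
\[
 c := (b-\delta)_+^{1/2}\,(1-zz^*)_+\,(b-\delta)_+^{1/2} \in A^+, \qquad r := p_c \in A^{**}.
\]

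\emph{Verification of $q \precsim_{\Cu} p \oplus r$.} By the $M_2(A)$-version of Proposition \ref{prop:TFAE:Cuntz_subequivalence} this reduces to $b \precsim a \oplus c$ in $\Cu(A)$. The pointwise identity $t + (1-t)_+ \ge 1$ (via functional calculus applied to $zz^*$) forces $yy^* + c \ge (b-\delta)_+$. Combined with the standard chain $yy^* + c \precsim yy^* \oplus c \sim (e-\ep)_+ \oplus c \precsim a \oplus c$ (the first $\precsim$ witnessed by the columns $(e_\lambda,e_\lambda)^t \in M_{2,1}(A)$ for an approximate unit $e_\lambda$ of $A_{yy^*+c}$; the equivalence $yy^* \sim (e-\ep)_+$ by Pedersen) and a standard supremum-limit in $\Cu(A)$ as $\delta \to 0$, this delivers $b \precsim a \oplus c$.

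\emph{Main obstacle: $p' \oplus r \precsim_{\Cu} q$.} Equivalently, $(e-\ep)_+ \oplus c \precsim b$ in $\Cu(A)$. This is the hard half: since $yy^*$ and $c$ do not commute, one cannot pass directly from $yy^* + c \precsim (b-\delta)_+$ to $\langle yy^*\rangle + \langle c\rangle \le \langle b\rangle$. Overcoming this requires an orthogonalisation argument in $M_2(A)$---for instance, analysing $T^*T$ for the row matrix $T := \bigl(y,\,(b-\delta)_+^{1/2}(1-zz^*)_+^{1/2}\bigr) \in M_{1,2}(A)$ via Pedersen's equivalence $T^*T \sim TT^* = yy^* + c$ together with a Schur-complement bound to dominate $(e-\ep)_+ \oplus c$ by $T^*T$. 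This step mirrors the proof of axiom (O5) of the Cuntz semigroup due to Coward--Elliott--Ivanescu.
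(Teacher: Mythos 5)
Your overall strategy---translating everything into element language via Proposition \ref{prop:TFAE:Cuntz_subequivalence} and building $r=p_c$ from R\o{}rdam's lemma---is genuinely different from the paper's, but as written it has two gaps, one of which is fatal to the approach. First, the inequality $q\precsim_{\Cu}p\oplus r$: your chain only yields $(b-\delta)_+\le yy^*+c\precsim a\oplus c$ for the \emph{one} $\delta$ used to build $c$, and the proposed ``supremum-limit as $\delta\to 0$'' is not available because $c=c_\delta$ changes with $\delta$; you would need $\langle (b-\delta')_+\rangle\le\langle a\rangle+\langle c\rangle$ for all $\delta'>0$ with a \emph{fixed} $c$. (This particular defect is repairable: apply R\o{}rdam's lemma with the full $b$ rather than $(b-\delta)_+$, i.e.\ arrange $(e-\ep)_+=z^*bz$ and set $c=b^{1/2}(1-zz^*)_+b^{1/2}$, so that $yy^*+c\ge b$ on the nose. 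A related problem is your reduction to $q=p_b$: for non-separable $A$ an open projection need not be a support projection, and shrinking $q$ preserves $p'\oplus r\precsim_{\Cu}q$ but destroys $q\precsim_{\Cu}p\oplus r$.) Second, and more seriously, the ``main obstacle'' $(e-\ep)_+\oplus c\precsim b$ is not actually overcome. The suggested device---comparing $\mathrm{diag}(y^*y,w^*w)$ with $T^*T$ for the row $T=(y,w)$ via a ``Schur-complement bound''---cannot work: for a positive $2\times 2$ operator matrix $S$ the diagonal $\mathrm{diag}(S_{11},S_{22})$ is in general \emph{not} Cuntz-dominated by $S$ (take $S$ to be the all-ones matrix in $M_2(\mathbb{C})$: it has rank one while its diagonal has rank two). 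Since $y^*w=z^*(b-\delta)_+(1-zz^*)_+^{1/2}$ does not vanish, no orthogonality is available at the element level, and this is exactly the point where a purely $C^*$-algebraic argument gets stuck.

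The paper's proof avoids both issues by staying inside the von Neumann algebra $A^{**}$, where exact orthogonal complements exist. It interpolates $p'\ssubset p''\ssubset p$, uses $p\precsim_{\Cu}q$ twice to produce open projections $q'\ssubset q''\ssubset q$ with $p'\sim_{\PZ}q'$ and $p''\sim_{\PZ}q''$, and then simply sets $r:=q-\overline{q'}$, which is open because $\overline{q'}$ is a closed projection under $q$. The identity $q=\overline{q'}+r$ is exact (no $\delta$-cutoff to remove), and $q'\perp r$ is automatic, which is precisely what replaces your missing orthogonalisation step; the two inequalities then follow from the short chain $p'\oplus r\sim_{\PZ}q'\oplus r\precsim_{\PZ}q=\overline{q'}+r\le q''+r\sim_{\PZ}p''\oplus r\precsim p\oplus r$. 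In other words, the availability of the complement $q-\overline{q'}$ is the whole content of the proposition's proof and is the feature of the open-projection picture that this section is designed to exploit; an element-level proof along your lines would need a substitute for it that your sketch does not supply.
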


\begin{proof}
    By Lemma \ref{lem:a} (ii) there exists an open projection $p''$ with $p'\ssubset p''\ssubset p$ (take $p''$ to be $p_{(a-\ep/2)_+}$ in that lemma).
    By the definition of Cuntz sub-equivalence there exists an open projection $q''$ such that $p''\sim_{\PZ} q'' \ssubset q$.
    Since $p''\sim_{\PZ} q''$ implies $p''\sim_{\Cu} q''$, there exists an open projection $q'$  with $p'\sim_{\PZ} q'\ssubset q''$. Then $r:=q-\overline{q'}$ is an open projection.

    Since $q'\ssubset q''$ implies $\overline{q'}\leq q''$, and $q' \le \overline{q'}$, we get
$$p' \oplus r \sim_{\PZ} q' \oplus r \precsim_{\PZ} q = \overline{q'}+r \precsim_{\Cu}  q''\oplus r \sim_{\PZ}  p''\oplus r \precsim  p\oplus r$$
as desired.
\end{proof}

\noindent
    Translated, this result says that for positive elements $a',a,b$ in $A$ with $a'\ssubset a\precsim b$ there exists a positive element $c$ such that $a'\oplus c\precsim b\precsim a\oplus c$.

    To formulate the result in the ordered Cuntz semigroup, we recall that an element $\alpha\in\Cu(A)$ is called \emph{way-below} $\beta\in\Cu(A)$, denoted $\alpha\ll\beta$, if for every increasing sequence $\{\beta_k\}$ in $\Cu(A)$ with $\beta\leq\sup_k\beta_k$ there exists $l\in\N$ such that already $\alpha\leq\beta_{l}$.
    Consequently, in the Cuntz semigroup we get the following almost algebraic order:

\begin{corol}[Almost algebraic order in the Cuntz semigroup]
    Let $A$ be a \Cs, and let $\alpha', \alpha, \beta$ in $\Cu(A)$ be such that $\alpha'\ll\alpha\le\beta$.
    Then there exists $\gamma\in \Cu(A)$ such that $\alpha'+\gamma \le \beta \le \alpha+\gamma$.
\end{corol}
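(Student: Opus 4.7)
The plan is to deduce the statement from the preceding proposition by lifting to positive elements in $A \otimes \K$ and then invoking the version for positive elements, noted in the paragraph immediately before the corollary, applied to that \Cs.

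First I would pick positive representatives $a',a,b \in (A\otimes\K)^+$ with $\alpha' = \langle a'\rangle$, $\alpha = \langle a\rangle$, and $\beta = \langle b\rangle$; since $\alpha \le \beta$ we have $a \precsim b$. The hypothesis $\alpha' \ll \alpha$, combined with the standard characterization of the way-below relation in $\Cu$ due to Coward, Elliott, and Ivanescu \cite{CowEllIva}, yields some $\ep > 0$ such that $a' \precsim (a-\ep)_+$. Set $\tilde a := (a-\ep)_+$. I then need that $\tilde a \ssubset a$ in the sense of Definition \ref{def:a<<b}: the positive contraction $e := f_{\ep/2}(a) \in A_a$ satisfies $e\tilde a = \tilde a$ by functional calculus, so $\tilde a$ is indeed compactly contained in $a$.

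Since $\tilde a \ssubset a$ and $a \precsim b$ in the \Cs{} $A \otimes \K$, the translation of the preceding proposition into positive elements (stated in the paragraph immediately above the corollary) applied to $A\otimes\K$ produces a positive element $c \in (A\otimes\K)^+$ with $\tilde a \oplus c \precsim b \precsim a \oplus c$. Setting $\gamma := \langle c \rangle \in \Cu(A)$ and using $a' \precsim \tilde a$ then gives
\[ \alpha' + \gamma \;\le\; \langle \tilde a \oplus c \rangle \;\le\; \beta \;\le\; \alpha + \gamma, \]
which is the desired conclusion.

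Beyond the preceding proposition, the only extra ingredient required is the way-below characterization in $\Cu(A)$ via cut-downs $(a-\ep)_+$; this is the sole (minor) obstacle, and everything else is formal translation through the Cuntz class map $\langle\cdot\rangle$.
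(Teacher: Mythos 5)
Your proposal is correct and follows exactly the route the paper intends: the corollary is derived from the preceding proposition via its translation to positive elements, with the hypothesis $\alpha'\ll\alpha$ converted into a compact containment $(a-\ep)_+\ssubset a$ together with $a'\precsim (a-\ep)_+$ by the standard cut-down characterization of the way-below relation. The paper leaves these details implicit (``Consequently, \dots''), and your write-up supplies precisely the missing steps.
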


%########################################################################################################################
%########################################################################################################################
\section{Comparison of projections by traces.}\label{sec5}

\noindent
    In this section we show that Murray-von Neumann \mbox{(sub-)}equivalence of open projections in the bidual of a separable \Cs{} is equivalent to tracial comparison of the corresponding positive elements of the \Cs.  For the proof we need to show that every lower semicontinuous tracial weight on a \Cs{} extends (not necessarily uniquely) to a normal tracial weight on its bidual; and that Murray-von Neumann comparison of projections in any von Neumann algebra "that is not too big" is determined by tracial weights. We expect those two results to be known to experts, but in lack of a reference and for completeness we have included their proofs.

Recall that a weight $\varphi$ on a \Cs{} $A$ is an additive map $\varphi \colon A^+
\to [0,\infty]$  satisfying $\varphi(\lambda a) = \lambda \varphi(a)$ for all $a \in A^+$ and all $\lambda
 \in \R^+$. We say that $\varphi$ is \emph{densely defined} if the set $\{a \in A^+ : \varphi(a) < \infty\}$ is dense in $A^+$. Recall from Section \ref{sec2} that the set of (norm) lower semicontinuous tracial weights on a $A$ in this paper is denoted by $T(A)$.

If $M$ is a von Neumann algebra, then let $W(M)$ denote the set of  \emph{normal} weights on $M$, and let $W_{\tr}(M)$ denote the set of normal tracial weights on $M$, i.e., weights $\varphi$ for which $\varphi(x^*x) = \varphi(xx^*)$ for all $x \in M$. The standard trace on $\B(H)$ is an example of a normal tracial weight.

For the extension of weights on a \Cs{} to its universal enveloping von Neumann algebra, we use the result below from \cite[Proposition 4.1 and Proposition 4.4]{Comb}. For every $f$ in the dual $A^*$ of a \Cs{} $A$, let $\widetilde{f}$ denote the unique normal extension of $f$ to $A^{**}$. (One can equivalently obtain $\widetilde{f}$ via the natural pairing: $\widetilde{f}(z) = \langle f, z \rangle$ for $z \in A^{**}$.)

\begin{prop}[{Combes, \cite{Comb}}]
\label{prop:extension_weight}
    Let $A$ be a \Cs{}, let $\varphi \colon A^+ \to [0,\infty]$ be a densely defined lower semicontinuous       weight. Define a map $\widetilde{\varphi} \colon (A^{**})^+ \to [0,\infty]$ by:
    $$\widetilde{\varphi}(z):=\sup\{\widetilde{f}(z)\ :\ f\in A^*, \,0 \le  f\leq\varphi\} , \qquad z\in (A^{**})^+.$$
Then $\widetilde{\varphi}$ is a normal weight on $A^{**}$ extending $\varphi$.
    Moreover, if $\varphi$ is tracial, then $\widetilde{\varphi}$ is the unique extension of $\varphi$ to a normal weight on $A^{**}$.
\end{prop}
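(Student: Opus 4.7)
The plan is to check the four assertions in sequence: that $\widetilde{\varphi}$ extends $\varphi$, that it is normal, that it satisfies the weight axioms (positive homogeneity and additivity), and finally that in the tracial case it is the unique normal extension. The first two follow from general principles; additivity and uniqueness are the delicate points.

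For the extension property, note that every $f\in A^*$ with $0\le f\le\varphi$ satisfies $\widetilde{f}(a)=f(a)\le\varphi(a)$ for $a\in A^+$, giving $\widetilde{\varphi}\le\varphi$ on $A^+$. The reverse inequality is the Hahn--Banach-type representation theorem of Combes \cite{Comb}: any densely defined, norm lower semicontinuous weight is the pointwise supremum of the positive linear functionals it dominates. For normality, each $\widetilde{f}$ is $\sigma$-weakly continuous on $A^{**}$, and the pointwise supremum of $\sigma$-weakly continuous positive functionals is $\sigma$-weakly lower semicontinuous on $(A^{**})^+$, which is equivalent to $\widetilde{\varphi}$ preserving suprema of bounded increasing nets. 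Positive homogeneity is immediate from the definition.

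The main obstacle is additivity. For $z,w\in(A^{**})^+$ the inequality $\widetilde{\varphi}(z+w)\le\widetilde{\varphi}(z)+\widetilde{\varphi}(w)$ is clear, but the reverse requires the convex cone $\Phi_\varphi:=\{f\in A^*_+:f\le\varphi\}$ to be upward directed. I would prove this along the lines of Combes \cite{Comb}: given $f_1,f_2\in\Phi_\varphi$, construct a common upper bound $g\in\Phi_\varphi$ by first reducing each $f_i$ along an approximate unit drawn from the Pedersen ideal of $\varphi$ so that the truncations $f_1^\lambda+f_2^\lambda$ lie in $\Phi_\varphi$, and then taking $g$ to be a suitable pointwise limit. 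Once $\Phi_\varphi$ is known to be directed, additivity follows because the supremum defining $\widetilde{\varphi}$ is over a directed set.

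For the tracial case, traciality of $\widetilde{\varphi}$ would be checked by first verifying $\widetilde{\varphi}(u^*zu)=\widetilde{\varphi}(z)$ for $u$ a unitary in the unitization of $A$ (where it reduces to $\varphi$ being tracial on $A$) and then extending to all unitaries in $A^{**}$ via Kaplansky density of the unitary group together with the $\sigma$-weak lower semicontinuity of $\widetilde{\varphi}$. For uniqueness, let $\psi$ be any normal tracial extension of $\varphi$. The inequality $\widetilde{\varphi}\le\psi$ is easy: each $\widetilde{f}$ with $f\in\Phi_\varphi$ is $\sigma$-weakly continuous and dominated by $\psi$ on $A^+$, hence on all of $(A^{**})^+$, and taking the supremum gives the claim. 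The reverse inequality $\psi\le\widetilde{\varphi}$ is where the trace hypothesis is essential; the plan is to deduce it from a Pedersen-style density result ensuring that every positive element of the trace ideal of $\psi$ is a $\sigma$-strong monotone limit of elements of $A^+$ of finite $\varphi$-value, at which point normality of $\widetilde{\varphi}$ together with $\widetilde{\varphi}|_{A^+}=\varphi$ closes the argument.
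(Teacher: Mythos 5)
First, a point of comparison that makes this case unusual: the paper does not prove this proposition at all. It is quoted verbatim from Combes \cite{Comb} (Propositions 4.1 and 4.4), and the authors' own contribution begins only with the next result, Proposition~\ref{prop:extension_tracial_weight}, where they show that the extension of a \emph{tracial} weight is again tracial. So there is no in-paper proof to measure your attempt against; your proposal has to be judged as a free-standing reconstruction of Combes's argument. As such, its skeleton is the right one: the extension property does rest on the representation of a densely defined lower semicontinuous weight as the supremum of the positive functionals it dominates; normality does follow from $\sigma$-weak lower semicontinuity of a pointwise supremum of normal functionals; and additivity does reduce to upward directedness of the cone $\Phi_\varphi=\{f\in A^*_+:f\le\varphi\}$ (more precisely, of the set $\{\lambda f: f\in\Phi_\varphi,\ 0<\lambda<1\}$ in Combes's formulation). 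Your Kaplansky-density argument for traciality of $\widetilde{\varphi}$ is not actually part of the statement, but it is, almost word for word, the argument the paper gives for Proposition~\ref{prop:extension_tracial_weight}, which is reassuring.

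The genuine gaps are in the two load-bearing steps. Directedness of $\Phi_\varphi$ is the entire content of Combes's theorem and is only gestured at ("reduce along an approximate unit \dots take a suitable pointwise limit"); as written this is an appeal to the result being proved rather than a proof. More seriously, in the uniqueness argument the step you call easy does not close as stated: from $\widetilde{f}\le\psi$ on $A^+$ you cannot conclude $\widetilde{f}\le\psi$ on $(A^{**})^+$ merely from $\sigma$-weak continuity of $\widetilde{f}$ and normality of $\psi$. The set $\{z\in (A^{**})^+ : \psi(z)-\widetilde{f}(z)\ge 0\}$ is a superlevel set of a $\sigma$-weakly \emph{lower} semicontinuous function, hence need not be $\sigma$-weakly closed, so Kaplansky density of $A^+$ in $(A^{**})^+$ gives an inequality pointing the wrong way; one needs monotone approximation from $A^+$ (or the trace property, or Haagerup-type structure theory for $\psi$) to propagate the domination. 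Since the paper simply cites \cite{Comb} for all of this, the honest conclusion is that your outline correctly identifies where the difficulties lie but does not resolve them, whereas the paper deliberately outsources them.
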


\noindent Combes did not address the question whether the (unique) normal weight on $A^{**}$ that extends a densely defined lower semicontinuous tracial weight on $A$ is itself a trace. An affirmative answer to this question is included in the proposition below. 

\begin{prop}
\label{prop:extension_tracial_weight}
    Let $A$ be a \Cs{}, and let $\varphi$ be a lower semicontinuous tracial weight on $A$.
    Then there exists a normal, tracial weight on $A^{**}$ that extends $\varphi$.
\end{prop}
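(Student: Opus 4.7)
The plan is to reduce to the densely defined case, apply Combes' theorem (Proposition \ref{prop:extension_weight}), and then establish traciality by constructing a second normal tracial extension through the GNS construction and invoking uniqueness.

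Since $\varphi$ is tracial, the left ideal $N_\varphi := \{x \in A : \varphi(x^*x) < \infty\}$ is self-adjoint and closed under both left and right multiplication by $A$: for $x \in N_\varphi$ and $a \in A$, the trace identity gives $\varphi((xa)^*(xa)) = \varphi((xa)(xa)^*) \le \|a\|^2 \varphi(xx^*) = \|a\|^2 \varphi(x^*x) < \infty$. Hence its closure $I := \overline{N_\varphi}$ is a closed two-sided ideal of $A$, and $\varphi|_I$ is densely defined, lower semicontinuous, and tracial. Because $I$ is a two-sided ideal, $I^{**} = zA^{**}$ for some central projection $z \in A^{**}$. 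It therefore suffices to extend $\varphi|_I$ to a normal tracial weight $\psi$ on $I^{**}$, since we can then define $\widetilde{\varphi}(x) := \psi(zx)$ when $(1-z)x = 0$ and $\widetilde{\varphi}(x) := +\infty$ otherwise; centrality of $z$ makes $\widetilde{\varphi}$ normal and additive, and the equivalences $(1-z)y = 0 \iff (1-z)y^* = 0 \iff (1-z)y^*y = 0 \iff (1-z)yy^* = 0$ (using that $z$ is central and self-adjoint) ensure that $\widetilde{\varphi}(y^*y) = \widetilde{\varphi}(yy^*)$ for every $y \in A^{**}$.

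By Proposition \ref{prop:extension_weight}, $\varphi|_I$ extends to a normal weight $\psi$ on $I^{**}$, and this extension is the unique normal weight extending $\varphi|_I$ (since $\varphi|_I$ is tracial). To show $\psi$ is itself tracial, I produce a second normal extension that is manifestly tracial and invoke uniqueness. For this, perform the GNS construction on $N_\varphi$ with inner product $\langle x,y\rangle := \varphi(y^*x)$, obtaining a Hilbert space $H$ and a $^*$-representation $\pi \colon I \to \B(H)$. The standard theory of lower semicontinuous tracial weights (via left Hilbert algebras, as developed in Combes' work) provides a canonical faithful normal semifinite tracial weight $\overline{\varphi}$ on the von Neumann algebra $M := \pi(I)''$ satisfying $\overline{\varphi}(\pi(a)) = \varphi(a)$ for every $a \in I^+$. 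By the universal property of the bidual, $\pi$ extends uniquely to a normal $^*$-homomorphism $\widehat{\pi} \colon I^{**} \to M$, and $\psi' := \overline{\varphi} \circ \widehat{\pi}$ is then a normal tracial weight on $I^{**}$ extending $\varphi|_I$. Combes' uniqueness forces $\psi = \psi'$, so $\psi$ is tracial.

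The main obstacle is invoking the existence of the canonical tracial weight $\overline{\varphi}$ on the GNS von Neumann algebra $M$; this rests on the standard but non-trivial left Hilbert algebra machinery for lower semicontinuous tracial weights. Everything else---checking that $N_\varphi$ is an ideal, producing the densely defined restriction $\varphi|_I$, identifying $I^{**}$ with $zA^{**}$, and lifting $\psi$ from $I^{**}$ to a normal tracial weight $\widetilde\varphi$ on all of $A^{**}$---is routine bookkeeping.
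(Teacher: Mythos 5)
Your proof is correct, and the first half (restricting $\varphi$ to the closed two-sided ideal $I$ on which it is densely defined, applying Combes, identifying $I^{**}$ with a central summand $zA^{**}$, and extending by $+\infty$ off that summand) is exactly the paper's reduction. Where you genuinely diverge is in proving that the Combes extension $\psi$ of the densely defined trace is itself tracial. The paper does this directly: it shows $\widetilde{\varphi}(uzu^*)=\widetilde{\varphi}(z)$ first for unitaries $u$ in the unitization $\widetilde{A}$, using the explicit formula $\widetilde{\varphi}(z)=\sup\{\widetilde{f}(z): f\in A^*,\ 0\le f\le\varphi\}$ together with the observation that $f\mapsto u.f$ permutes the set $\{f : 0\le f\le \varphi\}$ when $\varphi$ is tracial, and then passes to arbitrary unitaries of $A^{**}$ by Kaplansky density and $\sigma$-weak lower semicontinuity of $\widetilde{\varphi}$. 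You instead manufacture a second normal extension $\psi'=\overline{\varphi}\circ\widehat{\pi}$ that is tracial by construction and invoke the uniqueness clause of Combes' theorem to conclude $\psi=\psi'$. This is a clean and legitimate argument, but it outsources the substantive content to the classical result that a densely defined lower semicontinuous trace induces a normal semifinite trace on its GNS von Neumann algebra satisfying $\overline{\varphi}\circ\pi=\varphi$ on all of $I^+$ (Dixmier, or Pedersen's book, Section 5.2) --- machinery at least as heavy as what it replaces, whereas the paper's argument is essentially self-contained given Combes' formula. If you retain your route, you should note explicitly that $\widehat{\pi}$ maps $I^{**}$ onto $\pi(I)''$ (non-degeneracy of the GNS representation), that a normal weight composed with a normal $^*$-homomorphism is again normal, and that the identity $\overline{\varphi}(\pi(a))=\varphi(a)$ extends from the definition ideal to all of $I^+$ by lower semicontinuity and normality; all three points are routine but are exactly where the uniqueness argument could otherwise be challenged.
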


\begin{proof} The closure of the linear span of the set $\{a\in A^+\ :\ \varphi(a)<\infty\}$ is a closed two-sided ideal in $A$. Denote it by $I_\varphi$. The restriction of $\varphi$ to $I_\varphi$ is a densely defined tracial weight, which therefore, by Combes' extension result (Proposition \ref{prop:extension_weight}), extends (uniquely) to a normal weight $\widehat{\varphi}$ on $I_{\varphi}^{**}$. The ideal $I_\varphi$ corresponds to an open central projection $p$ in $A^{**}$ via the identification $I_\varphi = A^{**}p \cap A$, and $I_\varphi^{**} = A^{**} p$. In other words, $I_\varphi^{**}$ is a central summand in $A^{**}$. Extend $\varphi$ to a normal weight $\widetilde{\varphi}$ on the positive elements in $A^{**}$ by the formula
    \begin{align*}
        \widetilde{\varphi}(z)
        &=\begin{cases}
            \widehat{\varphi}(z), &\text{ if $z\in I_\varphi^{**}$}, \\
            \infty,  &\text{ otherwise.} \\
        \end{cases}
    \end{align*}
It is easily checked that $\widetilde{\varphi}$ is a normal weight that extends $\varphi$, and that $\widetilde{\varphi}$ is tracial if we knew that $\widehat{\varphi}$ is tracial. To show the latter, upon replacing $A$ with $I_\varphi$, we can assume that $\varphi$ is densely defined.

We proceed to show that $\widetilde{\varphi}$ is tracial under the assumption that $\varphi$ is densely defined. To this end it suffices to show that $\widetilde{\varphi}$ is unitarily invariant, i.e.,  that $\widetilde{\varphi}(uzu^*)=\widetilde{\varphi}(z)$ for all unitaries $u$ in $A^{**}$ and all positive elements $z$ in $A^{**}$.
We first check this when the unitary $u$ lies in $\widetilde{A}$, the unitization of $A$, which we view as a unital sub-\Cs{} of $A^{**}$, and for an arbitrary positive element $z$ in $A^{**}$. For each $f$ in $A^*$ let $u.f$ denote the functional in $A^*$ given by $(u.f)(a) = f(uau^*)$ for $a \in A$.
By the trace property of $\varphi$ we see that if $f \in A^*$ is such that $0 \le f \le \varphi$, then also $0 \le u.f \le \varphi$, and vice versa since $f = u^*.(u.f)$.
It follows that
    \begin{align*}
        \widetilde{\varphi}(uzu^*)
            &=\sup\{\widetilde{f}(uzu^*) : f\in A^*, \,  0 \le f\leq\varphi\}
         \, = \, \sup\{\widetilde{u.f} (z) : f \in A^*, \, 0 \le f\leq\varphi\}  \\
            &=\sup\{\widetilde{f}(z) : f\in A^*,  \, 0 \le f\leq\varphi\}  \,  = \, \widetilde{\varphi}(z).
    \end{align*}

    For the general case we use Kaplansky's density theorem (see \cite[Theorem 2.3.3, p.25]{Ped}), which says that the unitary group $U(\widetilde{A})$ is $\sigma$-strongly dense in $U(A^{**})$.
    Thus, given $u$ in $U(A^{**})$ we can find a net $(u_\lambda)$ in $U(\widetilde{A})$
converging $\sigma$-strongly to $u$. It follows that $(u_\lambda z u_\lambda^*)$ converges $\sigma$-strongly (and hence $\sigma$-weakly) to $uzu^*$. As $\widetilde{\varphi}$ is $\sigma$-weakly lower semicontinuous (see \cite[III.2.2.18, p.\ 253]{Bla}), we get
$$\widetilde{\varphi}(uzu^*) =\widetilde{\varphi}(\lim_\lambda u_\lambda zu_\lambda^*)
            \leq\lim_\lambda\widetilde{\varphi}(u_\lambda zu_\lambda^*) =\widetilde{\varphi}(z).$$
The same argument shows that $\widetilde{\varphi}(z) =\widetilde{\varphi}(u^*(uzu^*)u) \leq\widetilde{\varphi}(uzu^*)$. This proves that $\widetilde{\varphi}(uzu^*)=\widetilde{\varphi}(z)$ as desired.
\end{proof}

\noindent The extension $\widetilde{\varphi}$ in Proposition \ref{prop:extension_tracial_weight} need not be unique if $\varphi$ is not densely defined. Take for example the trivial trace $\varphi$ on the Cuntz algebra $\cO_2$ (that is zero on zero and infinite elsewhere). Then every normal tracial weight on $\cO_2^{**}$ that takes non-zero (and hence the value $\infty$) on every properly infinite projection is an extension of $\varphi$, and there are many such normal tracial weights arising from the type I$_\infty$ and type II$_\infty$ representations $\cO_2$.  On the other hand, every densely defined lower semicontinuous tracial weight on a \Cs{} extends uniquely to a normal tracial weight on its bidual by Combes' result (Proposition \ref{prop:extension_weight}) and by Proposition \ref{prop:extension_tracial_weight}.

\begin{rema} \label{rem:traces}
Given a \Cs{} $A$ equipped with a lower semicontinuous tracial weight $\tau$ and a positive element $a$ in $A$. Then we can associate to $\tau$ a dimension function $d_\tau$ on $A$ (as above Definition
\ref{def:trace-comparison}). Let $\widetilde{\tau}$ be (any) extension of $\tau$ to a normal tracial weight on $A^{**}$ (cf.\  Proposition \ref{prop:extension_tracial_weight}). Then $d_\tau(a) = \widetilde{\tau}(p_a)$. To see this, assume without loss of generality that $a$ is a contraction. Then $p_a$ is the strong operator limit of the increasing sequence $\{a^{1/n}\}$, whence
$$d_\tau(a) = \lim_{n \to \infty} \tau(a^{1/n}) = \lim_{n \to \infty} \widetilde{\tau}(a^{1/n}) = \widetilde{\tau}(p_a)$$
by normality of $\widetilde{\tau}$.
\end{rema}

\begin{corol}
\label{prop:MvN-comp_implies_LDF-comp}
    Let $a$ and $b$ be positive elements in a \Cs{} $A$.
    If $p_a\precsim p_b$ in $A^{**}$, then $a \precsim_{\tr} b$ in $A$; and if $p_a \sim p_b$ in $A^{**}$, then $a \sim_{\tr} b$ in $A$.
\end{corol}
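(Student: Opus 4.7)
The plan is to use Remark \ref{rem:traces} together with Proposition \ref{prop:extension_tracial_weight} to transfer the Murray--von Neumann comparison in $A^{**}$ into an inequality of normal trace values, which then translates back into an inequality of dimension functions on $A$.

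In detail, fix an arbitrary $\tau\in T(A)$. By Proposition \ref{prop:extension_tracial_weight} there exists a normal tracial weight $\widetilde{\tau}$ on $A^{**}$ extending $\tau$. By Remark \ref{rem:traces} we have the identities
$$d_\tau(a)=\widetilde{\tau}(p_a), \qquad d_\tau(b)=\widetilde{\tau}(p_b).$$
Now suppose $p_a\precsim p_b$ in the Murray--von Neumann sense, so there is a partial isometry $v\in A^{**}$ with $v^*v=p_a$ and $vv^*\le p_b$. Since $\widetilde{\tau}$ is tracial and monotone on positive elements,
$$d_\tau(a)=\widetilde{\tau}(p_a)=\widetilde{\tau}(v^*v)=\widetilde{\tau}(vv^*)\le\widetilde{\tau}(p_b)=d_\tau(b).$$
As $\tau\in T(A)$ was arbitrary, this gives $a\precsim_{\tr}b$.

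The statement for $\sim$ follows immediately by applying the previous paragraph in both directions. No step here is really an obstacle: all the substantive work (extending traces from $A$ to $A^{**}$ and the identification $d_\tau(a)=\widetilde{\tau}(p_a)$) has already been done in Proposition \ref{prop:extension_tracial_weight} and Remark \ref{rem:traces}, so the corollary is indeed a direct consequence.
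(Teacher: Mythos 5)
Your proof is correct and follows the same route as the paper: extend $\tau$ to a normal tracial weight on $A^{**}$ via Proposition \ref{prop:extension_tracial_weight}, use the identity $d_\tau(a)=\widetilde{\tau}(p_a)$ from Remark \ref{rem:traces}, and conclude from the tracial property that Murray--von Neumann subequivalence forces $\widetilde{\tau}(p_a)\le\widetilde{\tau}(p_b)$. The only difference is that you spell out the partial-isometry argument for the inequality $\omega(p_a)\le\omega(p_b)$, which the paper treats as immediate.
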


\begin{proof}
    Suppose that $p_a\precsim p_b$ in $A^{**}$. Then $\omega(p_a)\leq\omega(p_b)$ for every tracial weight $\omega$ on $A^{**}$.

    Now let $\tau\in T(A)$ be any lower semicontinuous tracial weight, and let $d_\tau$ be the corresponding dimension function.
    By Proposition \ref{prop:extension_tracial_weight}, $\tau$ extends to a tracial, normal weight $\widetilde{\tau}$ on $A^{**}$. Using the remark above, it follows that $d_\tau(a) = \widetilde{\tau}(p_a) \le \widetilde{\tau}(p_b) = d_\tau(b)$. This proves that $a \precsim_{\tr} b$. The second statement in the corollary follows from the first statement.
\end{proof}

\noindent
    We will now show that the converse of Corollary \ref{prop:MvN-comp_implies_LDF-comp} is true for separable \Cs s.
    First we need to recall some facts about the dimension theory of (projections in) von Neumann algebras.
    A good reference is the recent paper \cite{Sher} of David Sherman.

\begin{defi}[{Tomiyama \cite[Definition 1]{Tom}, see also \cite[Definition 2.3]{Sher}}]
\label{defn:homogeneousElt}
    Let $M$ be a von Neumann algebra, $p\in \Proj(M)$ a non-zero projection, and $\kappa$ a cardinal.
    Say that $p$ is $\kappa$-\emph{homogeneous} if $p$ is the sum of $\kappa$ mutually equivalent projections, each of which is the sum of centrally orthogonal $\sigma$-finite projections. Set
    \begin{align*}
        \kappa_M    &:=\sup\{\kappa\ :\ M\text{ contains a $\kappa$-homogeneous element} \}.
    \end{align*}
\end{defi}

\noindent
A projection can be $\kappa$-homogeneous for at most one $\kappa\geq\aleph_0$; and if $\kappa \ge \aleph_0$, then two $\kappa$-homogeneous projections are equivalent if they have identical central support (see \cite{Tom} and \cite{Sher}).  We shall use these facts in the proof of Proposition \ref{prop:comparison_proj_by_traces}.

    But first we show that the enveloping von Neumann algebra $A^{**}$ of a separable \Cs{} $A$ has $\kappa_{A^{**}}\leq\aleph_0$, a property that has various equivalent formulations and consequences (see \cite[Propositions 3.8 and 5.1]{Sher}).
    This property is useful, since it means that there are no issues about different ''infinities''.
   For instance, the set of projections up to Murray-von Neumann equivalence in an arbitrary  II$_\infty$ factor $M$ (not necessarily with separable predual) can be identified with $[0,\infty)\cup\{\kappa : \aleph_0\leq\kappa\leq\kappa_M\}$, see \cite[Corollary 2.8]{Sher}. Thus, tracial weights on $M$ need not separate projections up to equivalence. However, if $\kappa_M\leq\aleph_0$, then normal, tracial weights on $M$ do in fact separate projections up to Murray-von Neumann equivalence.

\begin{lem}
\label{prop:sep_C-alg_aleph_zero}
    Let $A$ be a separable \Cs.
    Then $\kappa_{A^{**}}\leq\aleph_0$.
\end{lem}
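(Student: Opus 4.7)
The plan is to argue by contradiction. Suppose some projection $p\in A^{**}$ is $\kappa$-homogeneous for a cardinal $\kappa>\aleph_0$, so that $p=\sum_{i\in I}p_i$ with $|I|=\kappa$ and the $p_i$ mutually equivalent. Fix an index $i_0\in I$ and partial isometries $v_i\in A^{**}$ with $v_i^*v_i=p_{i_0}$ and $v_iv_i^*=p_i$. Since each $p_i$ is by hypothesis a sum of centrally orthogonal $\sigma$-finite projections, $p_{i_0}$ dominates a non-zero $\sigma$-finite projection $q_0$. Set $w_i:=v_iq_0$ and $q_i:=w_iw_i^*=v_iq_0v_i^*\leq p_i$. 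A short calculation using $v_i^*v_j=0$ for $i\neq j$ shows that $\{q_i\}_{i\in I}$ is an uncountable family of mutually orthogonal $\sigma$-finite projections that are pairwise Murray-von Neumann equivalent, with each $q_i$ equivalent to $q_0$ via $w_i$.

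The next step is to exhibit a normal state that detects $q_0$ and whose GNS representation is on a separable Hilbert space. Since $q_0$ is $\sigma$-finite, there is a faithful normal state $\omega$ on $q_0A^{**}q_0$, and $\widetilde{\omega}(x):=\omega(q_0xq_0)$ is a normal state on $A^{**}$ with $\widetilde{\omega}(q_0)=1$. Its restriction $\varphi:=\widetilde{\omega}|_A$ is a state on the separable \Cs{} $A$, so the GNS Hilbert space $H_\varphi$ is separable. The universal property of the bidual provides a normal $^*$-homomorphism $\widetilde{\pi}_\varphi\colon A^{**}\to\pi_\varphi(A)''$ extending $\pi_\varphi$, and $z\mapsto\langle\widetilde{\pi}_\varphi(z)\xi_\varphi,\xi_\varphi\rangle$ is then a normal extension of $\varphi$ to $A^{**}$. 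By uniqueness of normal extensions (the identification $A^*=(A^{**})_*$ recorded just above Proposition \ref{prop:extension_weight}), this extension coincides with $\widetilde{\omega}$, so in particular $\widetilde{\pi}_\varphi(q_0)\neq 0$.

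To finish, apply $\widetilde{\pi}_\varphi$ to the partial isometries $w_i$: these implement equivalences $\widetilde{\pi}_\varphi(q_0)\sim\widetilde{\pi}_\varphi(q_i)$ inside $\pi_\varphi(A)''$, so each $\widetilde{\pi}_\varphi(q_i)$ is non-zero, while orthogonality of the $q_i$ is preserved. Thus $\{\widetilde{\pi}_\varphi(q_i)\}_{i\in I}$ would be an uncountable family of pairwise orthogonal non-zero projections in a von Neumann algebra acting on the separable Hilbert space $H_\varphi$, which is impossible. This contradicts $|I|>\aleph_0$. The main technical point is the initial reduction to $\sigma$-finite sub-projections that remain pairwise equivalent in a compatible way; once that reduction is made, the separability of $A$ enters only through the classical observation that a von Neumann algebra acting on a separable Hilbert space is $\sigma$-finite.
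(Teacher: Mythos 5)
Your argument is correct, and it ends with the same punchline as the paper --- a GNS representation of the separable \Cs{} $A$ lives on a separable Hilbert space, which cannot carry an uncountable family of pairwise orthogonal non-zero projections --- but it gets there by a genuinely different route and proves only the literal statement. The paper ignores the $\sigma$-finiteness clause in Tomiyama's definition altogether and establishes the stronger claim that \emph{any} family of non-zero, pairwise equivalent, pairwise orthogonal projections in $A^{**}$ is countable: it writes $\pi_u=\bigoplus_{\varphi\in S(A)}\pi_\varphi$, so that $A^{**}\subseteq\bigoplus_{\varphi}\B(H_\varphi)$, and compresses to a single summand $\B(H_\varphi)$ in which one member of the family (hence, by equivalence, every member) survives. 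You instead use the $\sigma$-finiteness built into homogeneity to extract a non-zero $\sigma$-finite subprojection $q_0\leq p_{i_0}$, transport it to pairwise orthogonal equivalent copies $q_i\leq p_i$, and then manufacture one explicit normal state (from a faithful normal state on $q_0A^{**}q_0$) whose GNS representation detects every $q_i$; identifying the vector-state extension with $\widetilde{\omega}$ via uniqueness of normal extensions is indeed the right way to see that $\widetilde{\pi}_\varphi(q_0)\neq 0$, and all the intermediate steps (orthogonality of the $q_i$ from $q_i\leq p_i$, the relations $w_i^*w_i=q_0$, $w_iw_i^*=q_i$, preservation of these under the normal $^*$-homomorphism $\widetilde{\pi}_\varphi$) check out. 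What the paper's argument buys is brevity and a strictly stronger, more quotable statement requiring no $\sigma$-finiteness; what yours buys is that it localizes everything to a single explicitly constructed state rather than invoking the full direct-sum decomposition of the universal representation, and it makes transparent exactly where separability of $A$ enters.
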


\begin{proof} We show the stronger statement that whenever $\{p_i\}_{i \in I}$ is a family of non-zero pairwise equivalent and orthogonal projections in $A^{**}$, then $\mathrm{card}(I) \le \aleph_0$.
    The universal representation $\pi_u$ of $A$ is given as $\pi_u  =\bigoplus_{\varphi\in S(A)}\pi_\varphi$,
where $S(A)$ denotes the set of states on $A$, and where $\pi_\varphi \colon  A\to \B(H_\varphi)$ denotes the GNS-representation corresponding to the state $\varphi$. It follows that
$$A^{**} = \pi_u(A)^{''} \subseteq \bigoplus_{\varphi\in S(A)} \B(H_\varphi).$$
The projections $\{p_i\}_{i \in I}$ are non-zero in at least one summand $\B(H_\varphi)$; but then $I$ must be countable because each $H_\varphi$ is separable.
\end{proof}

\begin{prop}
\label{prop:comparison_proj_by_traces}
    Let $M$ be a von Neumann algebra with $\kappa_{M}\leq\aleph_0$, and let $p,q\in \Proj(M)$ be two projections.
    Then $p\precsim q$ if and only if $\omega(p)\leq\omega(q)$ for all normal tracial weights $\omega$ on $M$.
\end{prop}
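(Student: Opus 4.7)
The forward direction is immediate: take $v \in M$ with $v^*v = p$ and $vv^* \le q$; then $\omega(p) = \omega(v^*v) = \omega(vv^*) \le \omega(q)$ for every normal tracial weight $\omega$. For the converse, the strategy is to decompose $M$ along central projections into its type I, type II$_1$, type II$_\infty$, and type III summands. The bound $\kappa_M \le \aleph_0$ passes to direct summands, Murray--von Neumann comparison localizes on central projections, and every normal tracial weight on a summand extends by zero to one on $M$. So the problem reduces to treating each of the four types separately.

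For the type III summand I would use two standard facts. First, every nonzero projection $e$ in a type III algebra is properly infinite, so it contains a countable family $\{e_n\}$ of pairwise orthogonal subprojections each equivalent to $e$; normality and the trace identity $\omega(e) \ge \sum_n \omega(e_n) = \aleph_0 \cdot \omega(e)$ then force $\omega(e) \in \{0,\infty\}$ for any normal tracial weight $\omega$. Second, in type III, $p \precsim q$ is equivalent to $c(p) \le c(q)$, where $c(\cdot)$ denotes the central cover. If the hypothesis held but $c(p) \not\le c(q)$, then $z := c(p)(1 - c(q))$ would be a nonzero central projection with $zp \neq 0$ and $zq = 0$; the prescription $\omega(x) = \infty$ for $x \in M^+$ with $zx \neq 0$ and $\omega(x) = 0$ otherwise defines a normal tracial weight satisfying $\omega(p) = \infty > 0 = \omega(q)$, contradicting the hypothesis. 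Thus $c(p) \le c(q)$, and $p \precsim q$.

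For the semifinite summands the approach is to invoke the dimension theory from \cite{Sher}. By Tomiyama's classification (see \cite[Corollary 2.8, Propositions 3.8 and 5.1]{Sher}), Murray--von Neumann equivalence classes of projections in a semifinite von Neumann algebra $N$ are parameterized by an extended positive center-valued dimension, and $p \precsim q$ holds precisely when this dimension is pointwise dominated. Under the hypothesis $\kappa_N \le \aleph_0$ the range of this dimension is entirely captured by pairings with normal positive functionals on the center, and these pairings correspond exactly to the normal tracial weights on $N$ obtained by composing a faithful normal semifinite trace with such central functionals. Consequently the assumption $\omega(p) \le \omega(q)$ for all normal tracial weights $\omega$ yields the desired inequality of dimensions and hence $p \precsim q$. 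The main obstacle is precisely this semifinite step: without the $\aleph_0$-bound the central dimension can take values at cardinals strictly larger than $\aleph_0$ that no normal tracial weight can resolve, and the hypothesis $\kappa_M \le \aleph_0$ is exactly what forces every infinite cardinal occurring in the dimension theory to collapse down to $\aleph_0$ and thereby become detectable by tracial evaluation.
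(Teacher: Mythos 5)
Your forward direction is fine, and your treatment of the type III (more generally, the properly infinite) part is close in spirit to the paper's: the paper likewise uses the normal tracial weight that is $0$ on $Mc_q$ and $\infty$ on positive elements outside $Mc_q$, together with the fact that under $\kappa_M\le\aleph_0$ every properly infinite projection is $\aleph_0$-homogeneous and hence equivalent to its central support. One caveat even there: the ``standard fact'' that in type III one has $p\precsim q$ iff $c(p)\le c(q)$ is \emph{false} for general, non-$\sigma$-finite type III algebras. For instance, in $\B(H)\overline{\otimes}N$ with $N$ a $\sigma$-finite type III factor and $\dim H=\aleph_1$, the identity and $e_{11}\otimes 1$ have the same central cover, but the identity is $\aleph_1$-homogeneous and so is not subequivalent to the $\sigma$-finite projection $e_{11}\otimes 1$. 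You must invoke $\kappa_M\le\aleph_0$ (via Tomiyama's homogeneous decomposition) at precisely this point; it is not a fact available independently of the hypothesis.

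The genuine gap is the semifinite step, which is where all the real work of the proposition lies. The assertion that ``the assumption $\omega(p)\le\omega(q)$ for all normal tracial weights yields the desired inequality of dimensions'' is, restricted to the semifinite summand, exactly the statement to be proved; the appeal to a center-valued dimension ``captured by pairings with normal positive functionals on the center'' is not a quotable result and does not explain \emph{why} a failure of domination is detected by some concrete normal tracial weight. The paper's proof supplies exactly this missing content: it first reduces to $q\le p$ by generalized comparability (a central $z$ with $zp\precsim zq$ and $(1-z)p\succsim(1-z)q$), then splits into the cases $q$ finite and $q$ properly infinite. In the finite case one must show $p=q$, and if $p-q\ne 0$ one explicitly constructs a normal tracial weight $\omega$ with $\omega(q)=1$ and $\omega(p-q)>0$, treating separately the subcases where $q$ and $p-q$ are not centrally orthogonal (take a normal tracial state on the finite algebra $qMq$ that is positive on a subprojection of $q$ equivalent to a subprojection of $p-q$, and extend it to $Mc_q$ and then to $M$) and where they are centrally orthogonal (use the $0/\infty$ weight off $Mc_q$). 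Nothing in your sketch substitutes for this construction, so the finite summands (type I and II$_1$, and the finite corners of the II$_\infty$ part) remain unproved.
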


\begin{proof}
    The ''only if'' part is obvious. We prove the ''if" part and assume accordingly that $\omega(p) \le \omega(q)$ for all normal tracial weights $\omega$ on $M$, and we must show that $p \precsim q$.  We show first that it suffices to consider the case where $q \le p$.

There is a central projection $z$ in $M$ such that $zp \precsim zq$ and $(1-z)p \succsim (1-z)q$. We are done if we can show that $(1-z)p \precsim (1-z)q$.
Every normal tracial weight on $(1-z)M$ extends to a normal tracial weight on $M$ (for example by setting it equal to zero on $zM$), whence our assumptions imply that $\omega((1-z)p) \le \omega((1-z)q)$ for all tracial weights $\omega$ on $(1-z)M$. Upon replacing $M$ by $(1-z)M$, and $p$ and $q$ by $(1-z)p$ and $(1-z)q$, respectively, we can assume that $p \succsim q$, i.e., that $q \sim q' \le p$ for some projection $q'$ in $M$. Upon replacing $q$ by $q'$ we can further assume that $q \le p$ as desired.

There is a central projection $z$ in $M$ such that $zq$ is finite and $(1-z)q$ is properly infinite (see \cite[6.3.7, p.\ 414]{KadRing2}). Arguing as above it therefore suffices to consider the two cases where $q$ is finite and where $q$ is properly infinite.

Assume first that $q$ is finite. We show that $p=q$. Suppose, to reach a contradiction, that $p-q\ne 0$. Then there would be a normal tracial weight $\omega$ on $M$ such that $\omega(q) = 1$ and $\omega(p-q) > 0$. But that would entail that $\omega(p) > \omega(q)$ in contradiction with our assumptions. To see that $\omega$ exists, consider first the case where $q$ and $p-q$ are not centrally orthogonal, i.e., that $c_q c_{p-q} \ne 0$. Then there are non-zero projections $e \le q$ and $f \le p-q$ such that $e \sim f$. Choose a normal tracial state $\tau$ on the finite von Neumann algebra $qMq$ such that $\tau(e) > 0$. Then $\tau$ extends uniquely to a normal tracial weight $\omega_0$ on $Mc_q$ and further to a normal tracial weight $\omega$ on $M$ by the recipe $\omega(x) = \omega_0(xc_q)$. Then $\omega(q) = \tau(q) = 1$ and $\omega(p-q) \ge \omega_0(f) = \omega_0(e) = \tau(e) >0$. In the case where $q$ and $p-q$ are centrally orthogonal, take a normal tracial weight $\omega_0$ (for example as above) such that $\omega_0(q) = 1$ and extend $\omega_0$ to a normal tracial weight $\omega$ on $M$ by the recipe $\omega(x) = \omega_0(x)$ for all positive elements $x \in Mc_q$ and $\omega(x) = \infty$ whenever $x$ is a positive element in $M$ that does not belong to $Mc_q$. Then $\omega(q) = 1$ and $\omega(p-q) = \infty$.

Assume next that $q$ is properly infinite.
Every properly infinite projection can uniquely be written as a central sum of homogeneous projections (see \cite[Theorem 1]{Tom}, see also \cite[Theorem 2.5]{Sher} and the references cited there).
By the assumption that $\kappa_{M}\leq\aleph_0$ we get that every properly infinite projection is $\aleph_0$-homogeneous.
Therefore $q$ is $\aleph_0$-homogeneous and hence equivalent to its central support projection $c_q$.
Let $\omega$ be the normal tracial weight on $M$ which is zero on $Mc_q$ and equal to $\infty$ on every positive element that does not lie in $Mc_q$. Then $\omega(p) \le \omega(q) = 0$, which shows that $p\in Mc_q$, and hence $c_p \leq c_q$.
It now follows that $p\leq c_p \leq c_q \sim q$, and so $p\precsim q$ as desired.
\end{proof}

\noindent We can now show that Murray-von Neumann \mbox{(sub-)}equivalence of open projections in the bidual of a \Cs{} is equivalent to tracial \mbox{(sub-)}equivalence of the corresponding positive elements in the \Cs.

\begin{theor}
\label{prop:LDF-comp_implies_MvN-comp}
    Let $a$ and $b$ be positive elements in a separable \Cs{} $A$. Then $p_a \precsim p_b$ in $A^{**}$ if and only if $a \precsim_{\tr} b$ in $A$; and $p_a \sim p_b$  in $A^{**}$ if and only if $a \sim_{\tr} b$ in $A$.
\end{theor}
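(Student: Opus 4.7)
The forward direction ($p_a \precsim p_b$ implies $a \precsim_{\tr} b$, and similarly for $\sim$) is already supplied by Corollary \ref{prop:MvN-comp_implies_LDF-comp}, so the task reduces to the converse implications. Moreover, since $p \sim q$ in a von Neumann algebra if and only if $p \precsim q$ and $q \precsim p$, and since $a \sim_{\tr} b$ is by definition the two-sided version of $\precsim_{\tr}$, the equivalence assertion follows immediately from the subequivalence assertion applied twice. Hence the plan is to prove a single statement: if $a \precsim_{\tr} b$ in $A$, then $p_a \precsim p_b$ in $A^{**}$.

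The main machinery is Proposition \ref{prop:comparison_proj_by_traces}, which says that in any von Neumann algebra $M$ with $\kappa_M \le \aleph_0$, Murray--von Neumann subequivalence of projections is characterized by comparison under all normal tracial weights on $M$. First I would invoke Lemma \ref{prop:sep_C-alg_aleph_zero} to record that $\kappa_{A^{**}} \le \aleph_0$, which is where separability of $A$ is used. It then suffices to show $\omega(p_a) \le \omega(p_b)$ for every normal tracial weight $\omega$ on $A^{**}$. Given such an $\omega$, I would form its restriction $\tau := \omega|_{A^+}$ and check that $\tau \in T(A)$: the trace identity and additivity pass to the restriction trivially, while norm lower semicontinuity follows because a normal weight on $A^{**}$ is $\sigma$-weakly lower semicontinuous and the inclusion $A \hookrightarrow A^{**}$ is norm-to-$\sigma$-weakly continuous. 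With $\omega$ itself serving as the normal tracial extension $\widetilde{\tau}$ appearing in Remark \ref{rem:traces}, I obtain
\[
\omega(p_a) \; = \; d_\tau(a) \;\le\; d_\tau(b) \;=\; \omega(p_b),
\]
where the middle inequality is the hypothesis $a \precsim_{\tr} b$. Proposition \ref{prop:comparison_proj_by_traces} then yields $p_a \precsim p_b$.

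The main potential obstacle is verifying that every normal tracial weight on $A^{**}$ is genuinely captured by some $\tau \in T(A)$ via restriction in a way compatible with Remark \ref{rem:traces}, but this is essentially bookkeeping: the restriction lies in $T(A)$ by semicontinuity, and Remark \ref{rem:traces} applies verbatim because $\omega$ is itself normal and tracial and extends the restriction $\tau$. No further input beyond the tools already established earlier in the paper is needed.
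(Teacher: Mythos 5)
Your proposal is correct and follows essentially the same route as the paper: reduce to the subequivalence statement, restrict a given normal tracial weight $\omega$ on $A^{**}$ to a lower semicontinuous tracial weight on $A$, use Remark \ref{rem:traces} (with $\omega$ as its own normal extension) to get $\omega(p_a)=d_{\omega_0}(a)\le d_{\omega_0}(b)=\omega(p_b)$, and conclude via Lemma \ref{prop:sep_C-alg_aleph_zero} and Proposition \ref{prop:comparison_proj_by_traces}. The extra justification you give for lower semicontinuity of the restriction is a minor elaboration the paper leaves implicit.
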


\begin{proof}
    The "only if parts" have already been proved in Corollary \ref{prop:MvN-comp_implies_LDF-comp}.
    Suppose that $a \precsim_{\tr} b$. Let $\omega$ be a normal tracial weight on $A^{**}$, and denote by $\omega_0$ its restriction to $A$.
    Then $\omega_0$ is a norm lower semicontinuous tracial weight on $A$, whence
    $$\omega(p_a) = d_{\omega_0}(a) \le d_{\omega_0}(b) = \omega(p_b),$$
    cf.\ Remark \ref{rem:traces}.
    As $\omega$ was arbitrary we can now conclude from Lemma \ref{prop:sep_C-alg_aleph_zero} and Proposition \ref{prop:comparison_proj_by_traces} that $p_a \precsim p_b$.

    The second part of the theorem follows easily from the first part.
\end{proof}

\begin{corol}
\label{prop:PZ_implies_Cu_implies_MvN}
    Let $A$ be a separable \Cs, and $p$ and $q$ be two open projections in $A^{**}$.
    Then:
    \begin{align*}
        p \precsim_{\PZ} q  \implies p \precsim_{\Cu} q \implies p \precsim q, \qquad  p \sim_{\PZ} q  \implies p \sim_{\Cu} q \implies p \sim q
    \end{align*}
The first implication in each of the two strings holds without assuming $A$ to be separable.
\end{corol}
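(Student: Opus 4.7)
My plan is to prove the two subequivalence implications separately; the equivalence statements then follow at once by applying each subequivalence implication to both $p \precsim \cdot q$ and $q \precsim \cdot p$.

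For $p \precsim_{\PZ} q \Rightarrow p \precsim_{\Cu} q$, which I want to establish without any separability assumption, I first unpack PZ-subequivalence to obtain an open projection $p'$ with $p \sim_{\PZ} p' \leq q$, implemented by a partial isometry $v \in A^{**}$ with $v^*v = p$, $vv^* = p'$, $vA_p \subseteq A$ and $v^*A_{p'} \subseteq A$. As in the proof of Proposition \ref{prop:PZ_implemented_by_left_right}, this yields the $*$-isomorphism $A_p \to A_{p'}$, $x \mapsto vxv^*$, and in particular $vA_pv^* \subseteq A$. Given an arbitrary open projection $r \ssubset p$, I would set $r' := vrv^*$ and verify three things: (a) $r'$ is open, because conjugating by $v$ an increasing net in $A_r^+ \subseteq A_p^+$ converging strongly to $r$ produces an increasing net in $A^+$ with strong limit $r'$; (b) $r \sim_{\PZ} r'$ via the partial isometry $w := vr$, which satisfies $w^*w = rpr = r$, $ww^* = r'$, $wA_r = vA_r \subseteq vA_p \subseteq A$, and $w^*A_{r'} = v^*A_{r'} \subseteq v^*A_{p'} \subseteq A$; and (c) $r' \ssubset q$, because if $e \in A_p^+$ is a contraction with $re = r$ witnessing $r \ssubset p$, then $e' := vev^* \in A_{p'}^+ \subseteq A_q^+$ is a contraction satisfying $r'e' = r'$, and the argument of Lemma \ref{lm:unit} upgrades this to $\overline{r'}e' = \overline{r'}$.

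The second implication $p \precsim_{\Cu} q \Rightarrow p \precsim q$ with $A$ separable is by contrast just a short chain of already-proved results: separability lets me realize $p = p_a$ and $q = p_b$ for some positive $a, b \in A$; Proposition \ref{prop:TFAE:Cuntz_subequivalence} then converts $p_a \precsim_{\Cu} p_b$ into $a \precsim b$; Remark \ref{remark:comparison} gives $a \precsim_{\tr} b$; and Theorem \ref{prop:LDF-comp_implies_MvN-comp} closes the loop with $p_a \precsim p_b$.

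The step that needs the most care is the identity $w^*A_{r'} = v^*A_{r'}$ in (b); this reduces to $rv^*y = v^*y$ for $y \in A_{r'}$, which I would verify from $y = r'y = vrv^*y$ together with $v^*v = p \geq r$, since then $v^*y = v^*vrv^*y = prv^*y = rv^*y$. Once this is in hand, the compactness witness in (c) automatically lies in $A_q$ via $A_{p'} \subseteq A_q$, and everything else is routine strong-continuity bookkeeping with the PZ-identities $vA_p \subseteq A$ and $v^*A_{p'} \subseteq A$.
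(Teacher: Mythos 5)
Your proposal is correct, but it splits into two halves of different character relative to the paper. For the second implication ($p \precsim_{\Cu} q \Rightarrow p \precsim q$) you follow exactly the paper's route: realize $p=p_a$, $q=p_b$ by separability, then chain Proposition \ref{prop:TFAE:Cuntz_subequivalence}, Remark \ref{remark:comparison}, and Theorem \ref{prop:LDF-comp_implies_MvN-comp}. For the first implication the paper's displayed proof also goes through positive elements (via Proposition \ref{prop:TFAE:subequivalence} and Remark \ref{remark:comparison}, i.e.\ $p_a\precsim_{\PZ}p_b \Leftrightarrow a\precsim_s b \Rightarrow a\precsim b \Leftrightarrow p_a\precsim_{\Cu}p_b$), which requires separability to write $p=p_a$; the claim that this implication holds in general is only asserted, with a remark that a ``more direct'' proof exists. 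Your argument supplies exactly that direct proof: given $r\ssubset p$ you transport $r$ to $r'=vrv^*$ along the PZ-partial isometry, and your verifications are sound --- $r'$ is open as a strong limit of the increasing net $va_iv^*\in A^+$; $w=vr$ implements $r\sim_{\PZ}r'$, with the only delicate identity $w^*A_{r'}=v^*A_{r'}$ correctly reduced to $rv^*y=v^*y$ via $y=vrv^*y$ and $pr=r$; and the compactness witness $e'=vev^*\in A_{p'}\subseteq A_q$ satisfies $r'e'=r'$, which the argument of Lemma \ref{lm:unit} (applied to the closed projection $\chi_{\{1\}}(e')$) upgrades to $\overline{r'}e'=\overline{r'}$, giving $r'\ssubset q$. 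The payoff of your route is that it genuinely establishes the final sentence of the corollary (no separability needed for the first implication in each string), at the cost of redoing by hand the transport of compact containment that the paper has already packaged, for support projections, in Propositions \ref{prop:PZ_implemented_by_left_right} and \ref{prop:TFAE:cpct_containment}. The derivation of the equivalence statements from the subequivalence ones is fine, noting that for the last step you implicitly use the Schr\"{o}der--Bernstein property of Murray-von Neumann subequivalence, whereas the paper gets $p\sim q$ directly from $a\sim_{\tr}b$ via Theorem \ref{prop:LDF-comp_implies_MvN-comp}.
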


\begin{proof}  Since $A$ is separable there are positive elements $a$ and $b$ such that $p = p_a$ and $q = p_b$. The corollary now follows from Remark \ref{remark:comparison}, Proposition \ref{prop:TFAE:equivalence}, Proposition \ref{prop:TFAE:Cuntz_subequivalence}, and
Theorem \ref{prop:LDF-comp_implies_MvN-comp}.
\end{proof}

\noindent
    It should be remarked, that one can prove the corollary above more directly without invoking Remark \ref{remark:comparison}.

\begin{rema}
    There is a certain similarity of our main results with the following result recently obtained by Robert in \cite[Theorem 1]{Rob}:
    If $a,b$ are positive elements of a \Cs{} $A$, then the following are equivalent:
    \begin{enumerate}
        \item
        $\tau(a)=\tau(b)$ for all norm lower semicontinuous tracial weights on $A$,
        \item
        $a$ and $b$ are Cuntz-Pedersen equivalent, i.e., there exists a sequence $\{x_k\}$ in $A$ such that  $a=\sum_{k=1}^\infty x_kx_k^*$ and $b=\sum_{k=1}^\infty x_k^*x_k$ (the sums are norm-convergent).
    \end{enumerate}

    It is known that Cuntz-Pedersen equivalence and Murray-von Neumann equivalence agree for projections in a \emph{von Neumann algebra} (see \cite[Theorem 4.1]{KadPed}).
\end{rema}

%########################################################################################################################
%########################################################################################################################
\section{Summary and applications.}
\label{sect:summary}

\noindent
In the previous sections we have established equivalences and implications between different types of comparison of positive elements and their corresponding open projections and Hilbert modules. The results we have obtained can be summarized as follows.  Given two positive elements $a$ and $b$ in a (separable) \Cs{} $A$ with corresponding open projections $p_a$ and $p_b$ in $A^{**}$ and Hilbert $A$-modules $E_a$ and $E_b$, then:
 \begin{equation} \tag{$\ast$} \label{eq:1}
\begin{split}
        \xymatrix@M=5pt{
        a\precsim_s b 	\ar@{<=>}[r] \ar@{=>}[d]
            & 	p_a\precsim_{\PZ}p_b \ar@{=>}[d] \\
        a\precsim b 	\ar@{<=>}[r] \ar@{=>}[d]
            & 	p_a\precsim_{\Cu}p_b \ar@{=>}[d] \\
        a\precsim_{\tr} b 	\ar@{<=>}[r] & p_a\precsim p_b
        }  \qquad \qquad
     \xymatrix@M=5pt{
        a\sim_s b 	\ar@{<=>}[r] \ar@{=>}[d]
            & 	p_a\sim_{\PZ}p_b \ar@{<=>}[r] \ar@{=>}[d]
            & 	E_a\cong E_b \ar@{=>}[d] \\
        a\approx b 	\ar@{<=>}[r] \ar@{=>}[d]
            & 	p_a\sim_{\Cu}p_b \ar@{<=>}[r] \ar@{=>}[d]
            & 	E_a\sim_{\Cu}E_b \\
        a\sim_{\tr} b 	\ar@{<=>}[r] & p_a\sim p_b
        }
\end{split}
 \end{equation}

\noindent
    We shall discuss below to what extend the reverse (upwards) implications hold. First we remark how the middle bi-implications yield an isomorphism between the Cuntz semigroup and a semigroup of open projections modulo Cuntz equivalence.

\begin{pargr} [The semigroup of open projections] \label{cu=p}
    Given a \Cs{} $A$. We wish to show that its Cuntz semigroup $\Cu(A)$ can be identified with an ordered semigroup of open projections in $(A \otimes \K)^{**}$. More specifically, we show
$\Proj_{\mathrm{o}}((A\otimes \K)^{**})/\!\!\sim_{\Cu}$ is an ordered abelian semigroup which is isomorphic to $\Cu(A)$.

    First we note how addition is defined on  the set
$\Proj_{\mathrm{o}}((A\otimes \K)^{**})/\!\!\sim_{\Cu}$. Note that
    $$A \otimes \B(\ell^2) \subseteq \cM(A \otimes \K) \subseteq (A \otimes \K)^{**}.$$
    Choose two isometries $s_1$ and $s_2$  in $\B(\ell^2)$ satisfying the Cuntz relation  $1=s_1s_1^*+s_2s_2^*$, and consider the isometries $t_1=1\otimes s_1$ and $t_2=1\otimes s_2$ in $\cM(A \otimes \K) \subseteq (A\otimes\K)^{**}$.
    For every positive element $a$ in $A \otimes \K$ and for every isometry $t$ in $\cM(A \otimes \K)$  we have $a \sim_s t a t^*$ in $A \otimes \K$ and $p_a \sim_{\PZ} t p_a t^* = p_{t at^*}$ in $(A \otimes \K)^{**}$.
    We can therefore define addition in $\Proj_{\mathrm{o}}((A\otimes \K)^{**})/\!\!\sim_{\Cu}$ by
    \begin{equation} \tag{$\ast \ast$} \label{eq:2}
        [p]_{\Cu}+[q]_{\Cu}   :=[t_1 p t_1^*+t_2 q t_2^*]_{\Cu}, \qquad p, q \in \Proj_{\mathrm{o}}((A\otimes \K)^{**}).
    \end{equation}
    The relation $\precsim_{\Cu}$ yields an order relation on $\Proj_{\mathrm{o}}((A\otimes \K)^{**})/\!\!\sim_{\Cu}$, which thus becomes an ordered abelian semigroup.

    Proposition \ref{prop:TFAE:Cuntz_subequivalence} and Corollary \ref{cor:TFAE:Cuntz_equivalence} applied to the \Cs{} $A \otimes \K$ yield that the mapping $\langle a \rangle \mapsto [p_a]_{\Cu}$, for $a \in (A \otimes \K)^+$, defines an isomorphism
    $$\Cu(A ) \cong \Proj_{\mathrm{o}}((A\otimes \K)^{**})/\!\!\sim_{\Cu}$$
    of ordered abelian semigroups whenever $A$ is a separable \Cs.
   In more detail, Proposition \ref{prop:TFAE:Cuntz_subequivalence} and Corollary \ref{cor:TFAE:Cuntz_equivalence} imply that the map $\langle a \rangle \mapsto [p_a]_{\Cu}$ is well-defined, injective, and order preserving. Surjectivity follows from the assumption that $A$ (and hence $A \otimes \K$) are separable, whence all open projections in $(A \otimes \K)^{**}$ are of the form $p_a$ for some positive element $a \in A \otimes \K$. Additivity of the map follows from the definition of addition defined in \eqref{eq:2} above and the fact that $\langle a \rangle + \langle b \rangle = \langle t_1at_1^* + t_2bt_2^* \rangle$ in $\Cu(A)$.
\end{pargr}

\begin{pargr}[The stable rank one case]
\label{pargr:sr1}
    It was shown by Coward, Elliott, and Ivanescu in \cite[Theorem 3]{CowEllIva} that in the case when $A$ is a separable \Cs{} with stable rank one, then two Hilbert $A$-modules are isometric isomorphic if and only if they are Cuntz equivalent, and that the order structure given by Cuntz subequivalence is equivalent to the one generated by inclusion of Hilbert modules together with isometric isomorphism (see also \cite[Theorem 4.29]{AraPerToms}).
    Combining those results with Proposition \ref{prop:TFAE:equivalence}, Proposition \ref{prop:TFAE:subequivalence}, Proposition \ref{prop:TFAE:Cuntz_subequivalence}, and Corollary \ref{cor:TFAE:Cuntz_equivalence} shows that the following holds for all $a,b\in A^+$ and for all $p,q \in \Proj_{\mathrm{o}}(A^{**})$:
    \begin{enumerate}
        \item[(1) \ ]
        $a \precsim b  \Leftrightarrow a \precsim_{s} b$, and $a \approx b \Leftrightarrow a \sim_{s} b$.
        \item[(1)'\ ]
        $p \precsim_{\Cu} q \Leftrightarrow p \precsim_{\PZ} q$, and $p \sim_{\Cu} q \Leftrightarrow p \sim_{\PZ} q$.
        \item[(2) \ ]
        If $a\precsim_s b$ and $b \precsim_s a$, then $a\sim_s b$.
        \item[(2)'\ ]
        If $p \precsim_{\PZ} q$ and $q \precsim_{\PZ} p$, then $p \sim_{\PZ}  q$.
    \end{enumerate}
    Hence the vertical implications between the first and the second row of \eqref{eq:1} can be reversed when $A$ is separable and of stable rank one.

 The right-implications in (1) and (2) (and hence in (1)' and (2)') above do not hold in general.
    Counterexamples were given by Lin in \cite[Theorem 9]{Lin1990}, by Perera in \cite[Before Corollary 2.4]{Per}, and by Brown and Ciuperca in \cite[Section 4]{BrownCiup}. For one such example take
 non-zero projections $p$ and $q$ in a simple, purely infinite \Cs. Then, automatically, $p \precsim q$, $p \precsim_s q$, $q \precsim_s p$, and $p \approx q$; but $p \sim q$ and $p \sim_s q$ hold (if and) only if $p$ and $q$ define the same $K_0$-class (which they do not always do).

%    Note that given projections $p,q$ in a \Cs{} $A$ they are Murray-von Neumann equivalence $p\sim q$ in % $A$ if and only if $p\sim_s q$ (Blackadar equivalence).
%    Furthermore,  they are Murray-von Neumann subequivalent $p\precsim q$ in $A$ if and only if
% $p\precsim_s q$, which in turn happens if and only if $p\precsim q$ in the sense of Cuntz comparison
%  (thus, the potential conflict of notation between Cuntz- and Murray-von Neumann comparison is
% dispelled).

%  A counterexample for the stably finite case was given by .
    It is unknown whether (1)--(2)' hold for residually stably finite \Cs s, and in particular whether they 
hold for stably finite simple \Cs s.
\end{pargr}

\begin{pargr}[Almost unperforated Cuntz semigroup]
    We discuss here when the vertical implications between the second and the third row of \eqref{eq:1} can be reversed.
    This requires both a rather restrictive assumption on the \Cs{} $A$, and also an assumption on the positive elements $a$ and $b$.
    To define the latter, we remind the reader of the notion of purely non-compact elements from \cite[Before Proposition 6.4]{EllRobSan}:
    The quotient map $\pi_I \colon A \to A/I$ induces a morphism $\Cu(A) \to \Cu(A/I)$ whenever $I$ is an ideal in $A$. 
    An element $\langle a\rangle$  in $\Cu(A)$ is \emph{purely non-compact} if whenever $\langle \pi_I(a)\rangle$ is compact for some ideal $I$, it is properly infinite, i.e., $2\langle \pi_I(a)\rangle=\langle \pi_I(a)\rangle$ in $\Cu(A/I)$.
   Recall that an element $\alpha$ in the Cuntz semigroup $\Cu(B)$ of a \Cs{} $B$ is called \emph{compact} if it is \emph{way-below} itself, i.e., $\alpha\ll \alpha$ (see the end of Section \ref{section_comp_posi} for the definition).
%    The existence of suprema in the Cuntz semigroup of a \emph{stable} \Cs{} was shown by Coward, Elliott, and Ivanescu in \cite{CowEllIva}.

%    It is proved in \cite[Proposition 6.4]{EllRobSan} that the set of purely non-compact elements of $\Cu(A)$ forms a subsemigroup closed under the passage to suprema of increasing sequences.

It is shown in \cite[Theorem 6.6]{EllRobSan} that if $\Cu(A)$ is almost unperforated and if $a$ and $b$ are positive elements in $A \otimes \K$ such that $\langle a \rangle$ is purely non-compact in $\Cu(A)$, then $\widehat{\langle a \rangle} \le \widehat{\langle b \rangle}$ implies that $\langle a \rangle \le \langle b \rangle$ in $\Cu(A)$. In the notation of \cite{EllRobSan}, and using \cite[Proposition 4.2]{EllRobSan}, $\widehat{\langle a \rangle} \le \widehat{\langle b \rangle}$ means that $d_\tau(a) \le d_\tau(b)$ for every (lower semicontinuous, possibly unbounded) 2-quasitrace on $A$. In the case where $A$ is exact it is known that all such 2-quasitraces are traces by Haagerup's theorem, \cite{Haa92}, (extended to the non-unital case by Kirchberg, \cite{Kir97}), and so it follows that $\widehat{\langle a \rangle} \le \widehat{\langle b \rangle}$ if and only if $a \precsim_{\tr} b$. We can thus rephrase \cite[Theorem 6.6]{EllRobSan} (see also \cite[Corollary 4.6 and Corollary 4.7]{Ror2004}) as follows:
    Suppose that $A$ is an exact, separable \Cs{} with $\Cu(A)$ almost unperforated.
    Then the following holds for all positive elements $a,b$ in $A \otimes \K$ :
    \begin{enumerate}
        \item[(3) \ ]
        If $\langle a \rangle \in \Cu(A)$ is purely non-compact, then $a \precsim_{\tr} b \Leftrightarrow a\precsim b$.
        \item[(4) \ ]
        If $\langle a \rangle, \langle b \rangle \in \Cu(A)$ are purely non-compact, then $a \sim_{\tr} b \Leftrightarrow a\approx b$.
    \end{enumerate}

  We wish to rephrase (3) and (4) above for open projections. We must first deal with the problem of choosing which kind of compactness of open projection to be invoked.
  Compactness of an open projection $p\in A^{**}$  as in Definition \ref{defn:cpct_containment_opnProj}  means that $p\in A$ (see Proposition \ref{prop:cpct_opn_proj_lies_in_A}).
    On the other hand, compactness for an element of the Cuntz semigroup $\Cu(A)$ is defined in terms of its ordering. Compactness of $p_a$ implies compactness of $\langle a \rangle\in\Cu(A)$ for every positive element $a$ in $A \otimes \K$.
    Brown and Ciuperca have shown that the converse holds in stably finite \Cs s, \cite[Corollary 3.3]{BrownCiup}.
    Recall that a \Cs{} is called stably finite if its stabilization contains no infinite projections.

    From now on, we restrict our attention to the residually stably finite case, which means that all quotients of the \Cs{} are stably finite.
    We define an open projection $p$ in $A^{**}$ to be \emph{residually non-compact} if there is no closed, central projection $z\in A^{**}$ such that $p z$ is a non-zero, compact (open) projection in $A^{**}z$.
    Here, we identify $A^{**}z$ with the bidual of the quotient $A/I$, where $I$ is the ideal corresponding to the open, central projection $1-z$, i.e., $I=A_{1-z}=(1-z)A^{**}(1-z)\cap A$.

 It follows from Proposition \ref{prop:cpct_opn_proj_lies_in_A} that an open projection $p\in A^{**}$ is residually non-compact if and only if there is no closed, central projection $z\in A^{**}$ such that $p z$ is non-zero and belongs to $Az$.
    Applying \cite[Corollary 3.3]{BrownCiup} to each quotient of $A$, we get that $\langle a \rangle\in\Cu(A)$ is purely non-compact if and only if $p_a$ is residually non-compact whenever $a$ is a positive element in $A \otimes \K$.

    Thus, for open projections $p,q$ in the bidual of a separable, exact, residually stably finite \Cs{} $A$ with $\Cu(A)$ almost unperforated, the following hold:
    \begin{enumerate}
        \item[(3)'\ ]
        If $p$ is residually non-compact, then $p\precsim q \Leftrightarrow p\precsim_{\Cu} q$.
        \item[(4)'\ ]
        If $p$ and $q$ are residually non-compact, then $p \sim q \Leftrightarrow p\sim_{\Cu} q$.
    \end{enumerate}

 If, in addition, $A$ is assumed to be simple, then an open projection $p$ in $A^{**}$ is residually non-compact if and only if it is not compact, i.e., if and only if $p\notin A$, thus:
    \begin{enumerate}
        \item[(3)''\ ]
        If $p\notin A$, then $p\precsim q \Leftrightarrow p\precsim_{\Cu} q$.
        \item[(4)''\ ]
        If $p,q\notin A$, then $p \sim q \Leftrightarrow p\sim_{\Cu} q$.
    \end{enumerate}

    If $A$ is stably finite, and $p,q$ are two Cuntz equivalent open projections in $A^{**}$, then $p$ is compact if and only if $q$ is compact (see \cite[Corollary 3.4]{BrownCiup}).
    Together with (3)'' and (4)'' this gives the following new picture of the Cuntz semigroup:
    Let $A$ be a separable, simple, exact, stably finite \Cs{} with $\Cu(A)$ almost unperforated.
    Then
    $$ \Cu(A) = V(A)\ \coprod\ \big( \Proj_{\mathrm{o}}((A\otimes \K)^{**}) \setminus \! \Proj(A\otimes \K ) \big) /\! \! \sim.$$
    In other words, the Cuntz semigroup can decomposed into the monoid $V(A)$ (of Murray-von Neumann equivalence classes of projections in $A \otimes \K$) and the non-compact open projections modulo Murray-von Neumann equivalence in $(A \otimes \K)^{**}$.

In conclusion, let us note that the vertical implications between the second and the third row of \eqref{eq:1} cannot be reversed in general. Actually, these implications will fail whenever $\Cu(A)$ is not almost unperforated, which tends to happen when $A$ has "high dimension". These implications can also fail for projections in very nice \Cs s. Indeed, if $p$ and $q$ are projections, then $p\sim_{\tr}q$ simply means that $\tau(p)=\tau(q)$ for all traces $\tau$. It is well-known that the latter does not imply Murray-von Neumann or Cuntz equivalence even for simple AF-algebras, if their $K_0$ groups have non-zero infinitesimal elements.
\end{pargr}

\vspace{.5cm}

\section*{Acknowledgments}
We thank Uffe Haagerup for his valuable comments on von Neumann algebras that helped us to shorten and improve some of the proofs in Section \ref{sec5}.

\bibliographystyle{annotate}
\bibliography{References}

\end{document}